\documentclass[twoside,10pt]{amsart}

\usepackage{amsthm,amsmath,amsfonts,epsfig,amssymb}
\usepackage[T1]{fontenc}
\usepackage{graphicx}
\usepackage{enumerate}
\usepackage{xy}
\usepackage{graphics}
\xyoption{all}
\entrymodifiers={+!!<0pt,\fontdimen22\textfont2>}

%%%%%%%%%%%%%%%%%%%%%%%%%%%%%%%%%%%%%%%%%%%%%%%%%%%%%%%%%%%%%%%%%%%%%%%%%%%%%%%%%%%%%%%%%%%%%%%%%%%%%%%%%%%%%%%%%%%%%%%%%%%%%%%%%%%%%%%%%%%%%%%%%%%%%%%%

\newtheorem{thm}{Theorem}[section]

\newtheorem{prop} [thm]{Proposition}
\newtheorem{lem} [thm]{Lemma}
\newtheorem{cor}[thm]{Corollary}

\newtheorem*{thm*}{Theorem}
\theoremstyle{definition}
\newtheorem{defin}[thm]{Definition}
\theoremstyle{remark}
\newtheorem{rem}[thm]{Remark}

%%%%%%%%%%%%%%%%%%%%%%%%%%%%%%%%%%%%%%%%%%%%%%%%%%%%%%%%%%%%%%%%%%%%%%%%%%%%%%%%%%%%%%%%%%%%%%%%%%%%%%%%%%%%%%%%%%%%%%%%%%%%%%%%%%%%%%%%%%%%%%%%%%%%%%%%

\def\C{\mathbb{C}}
\def\R{\mathbb{R}}

\def\Z{\mathbb{Z}}
\def\N{\mathbb{N}}

\def\A{\mathbb {A}}
\def\O{\mathcal O}

%%%%%%%%%%%%%%%%%%%%%%%%%%%%%%%%%%%%%%%%%%%%%%%%%%%%%%%%%%%%%%%%%%%%%%%%%%%%%%%%%%%%%%%%%%%%%%%%%%%%%%%%%%%%%%%%%%%%%%%%%%%%%%%%%%%%%%%%%%%%%%%%%%%%%%%%%

\def\exten#1#2#3#4{\mathrm{Ext}_{#2}^{#1}(#3,#4)}
\def\spec#1{\mathrm{Spec}(#1)}
\def\homm#1#2#3{\mathrm{Hom}_{#1}(#2,#3)}
\def\uni#1#2{(#2_1,\ldots,#2_{#1})}
\def\dim#1{\mathrm{dim}(#1)}

\def\K#1{\widetilde K_0Sp(#1)}
\def\Ko#1#2{\widetilde K_0Sp(#1,#2)}
\def\Sp#1{Sp_4(#1)}

\def\coker#1{\mathrm{coker}(#1)}
\def\ker#1{\mathrm{ker}(#1)}

%%%%%%%%%%%%%%%%%%%%%%%%%%%%%%%%%%%%%%%%%%%%%%%%%%%%%%%%%%%%%%%%%%%%%%%%%%%%%%%%%%%%%%%%%%%%%%%%%%%%%%%%%%%%%%%%%%%%%%%%%%%%%%%%%%%%%%%%%%%%%%%%%%%%%%%%

\begin{document}

\title{Projective modules over the real algebraic sphere of dimension $3$}
\author{J. Fasel}
\date{}
 \address{Jean Fasel \\
EPFL SB SMA-GE \\
MA C3 595 (B\^atiment MA)\\
Station 8\\
CH-1015 Lausanne
 }
\email{jean.fasel@gmail.com}

%%%%%%%%%%%%%%%%%%%%%%%%%%%%%%%%%%%%%%%%%%%%%%%%%%%%%%%%%%%%%%%%%%%%%%%%%%%%%%%%%%%%%%%%%%%%%%%%%%%%%%%%%%%%%%%%%%%%%%%%%%%%%%%%%%%%%%%%%%%%%%%%%%%%%%%%%%

\begin{abstract}
Let  $A$ be a commutative noetherian ring of Krull dimension $3$. We give a necessary and sufficient condition for $A$-projective modules of rank $2$ to be free. Using this, we show that all the finitely generated projective modules over the algebraic real $3$-sphere are free.
\end{abstract}

\maketitle
\tableofcontents

%%%%%%%%%%%%%%%%%%%%%%%%%%%%%%%%%%%%%%%%%%%%%%%%%%%%%%%%%%%%%%%%%%%%%%%%%%%%%%%%%%%%%%%%%%%%%%%%%%%%%%%%%%%%%%%%%

\section{Introduction}

Let $A$ be a noetherian commutative ring of Krull dimension $d$ and let $P$ be a projective $A$-module of rank $r$. Finding "computable" necessary and sufficient conditions for $P$ to be free is in general a very hard question. The first obvious obstruction is the class of $P$ in $\widetilde K_0(A)$ and, if $r>d$ its vanishing is also a sufficient condition by Bass-Schanuel cancellation theorem. Thus one is reduced to the case of stably free modules of rank $r\leq d$. In the critical case $r=d$, the situation is rather well understood. Observe first that a stably free module of rank $r=d$ can be seen as the kernel of a surjection $A^{d+1}\to A$ and that such surjections are given by rows $\uni {d+1}a$ such that the $a_i$'s generate $A$. Such a row is called unimodular, and it is easily seen that there is an action of $GL_{d+1}(A)$ on the set $Um_{d+1}(A)$ of unimodular rows of rank $d+1$. A stably free module is free if and only if its associated unimodular row is in the $GL_{d+1}(A)$ orbit of $(1,0,\ldots,0)$. This obstruction is "computable" in the following sense:
If $E_{d+1}(A)$ is the subgroup of $GL_{d+1}(A)$ generated by the elementary matrices, then there is a structure of abelian group on $Um_{d+1}(A)/E_{d+1}(A)$ (\cite[Theorem 4.1]{vdk}). If moreover $A$ is a smooth $k$-algebra of dimension $d\geq 3$ over a perfect field of characteristic different from $2$, this group can be seen as the cohomology group $H^d(A,G^{d+1})$, where $G^{d+1}$ is a suitable sheaf on $\spec A$ (\cite[Theorem 4.9]{Fa4}). The induced action of $SL_{d+1}(A)$ on $Um_{d+1}(A)/E_{d+1}$ reads in this particular situation as a group homomorphism $SL_{d+1}(A)/E_{d+1}(A)\to Um_{d+1}(A)/E_{d+1}(A)$ defined by sending a matrix to its first row. The cokernel of this map is in bijection with $Um_{d+1}(A)/SL_{d+1}(A)$, which is therefore computable in some situations (see \cite[Theorem 5.6]{Fa4} for some cases). 

If the projective module $P$ is of rank $r<d$, then the situation is much harder. For example, the question whether there exists a complex smooth algebra $A$ of dimension $3$ and a stably free non free projective $A$-module of rank $2$ has been solved only recently (\cite[Theorem 5.4]{Fa2}). In this paper, we prove the following theorem (Theorem \ref{cancellation} in the text):

\begin{thm*}
Let $A$ be a ring of dimension $3$. Then every projective module of rank $2$ with trivial determinant is free if and only if the group $\K A$ and the set $Um_4(A)/\Sp A$ are both zero.
\end{thm*}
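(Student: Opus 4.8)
The plan is to work throughout in symplectic language. A rank $2$ projective module $P$ together with a trivialization $\wedge^2 P\cong A$ is the same datum as a symplectic space $(P,\psi)$ of rank $2$, with $\psi(x,y)=x\wedge y$; conversely every rank $2$ symplectic form trivializes the determinant, and $P$ is free if and only if $(P,\psi)\cong H$, the hyperbolic plane, since an alternating form on a free rank $2$ module becomes standard after rescaling one basis vector. Under this dictionary $\K A$ is the obstruction to $(P,\psi)$ becoming hyperbolic after adding copies of $H$, while $Um_4(A)/\Sp A$ will be the obstruction that remains once symplectic cancellation has been applied as far as dimension $3$ permits. I will use three standard facts: Serre's splitting theorem (a projective module of rank $>\mathrm{dim}(A)$ has a unimodular element); that a nondegenerate alternating subform splits off as an orthogonal summand, so that $\psi(v,w)=1$ yields a decomposition $M\cong H\perp\langle v,w\rangle^\perp$; and the symplectic cancellation theorem, which cancels $H$ from symplectic spaces of rank $\ge 4$ over a ring of dimension $3$. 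Also, every symplectic space has trivial determinant.

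For the necessity, assume every rank $2$ projective with trivial determinant is free. I first show every symplectic space $(Q,\phi)$ over $A$ is hyperbolic, which gives $\K A=0$. If $Q$ has rank $2$, then $Q$ has trivial determinant, hence is free, hence $(Q,\phi)\cong H$. If $Q$ has rank $\ge 4$, this exceeds $\mathrm{dim}(A)$, so $Q$ has a unimodular element $v$; since $\phi$ identifies $Q$ with $Q^{*}$, the map $\phi(v,-)\colon Q\to A$ is onto, so $\phi(v,w)=1$ for some $w$ and $Q\cong H\perp Q'$ with $Q'$ symplectic of rank two less; induction on the rank, using the rank $2$ case at the bottom, yields $(Q,\phi)\cong H^{n}$ where $2n$ is the rank of $Q$. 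For the second invariant, take $v\in Um_4(A)$; applied to the standard form on $A^4$ the same reasoning places $v$ in a hyperbolic pair $(v,w)$, so $A^4\cong\langle v,w\rangle\perp\langle v,w\rangle^\perp$ with the complement of rank $2$ and trivial determinant, hence free, hence $\cong H$. Thus some isometry of $A^4$ carries $\langle v,w\rangle$ onto a standard hyperbolic plane, and composing it with a suitable element of $\Sp A$ coming from $SL_2(A)=Sp_2(A)$ acting on that plane produces $g\in\Sp A$ with $g(v)=e_1$; so $\Sp A$ acts transitively on $Um_4(A)$.

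For the sufficiency, assume $\K A=0$ and $Um_4(A)/\Sp A=0$ and let $(P,\psi)$ have rank $2$; the goal is $(P,\psi)\cong H$. Since $\K A=0$, the class of $(P,\psi)$ is a multiple of $[H]$ in $K_0Sp(A)$, so $(P,\psi)\perp H^m\cong H^{m+1}$ for some $m\ge 0$, and we may assume $m\ge 1$. Writing this as $\bigl((P,\psi)\perp H^{m-1}\bigr)\perp H\cong H^m\perp H$, the space $(P,\psi)\perp H^{m-1}$ has rank $2m$, so for $m\ge 2$ the symplectic cancellation theorem decreases $m$ by one; iterating, one reaches $(P,\psi)\perp H\cong H^2=(A^4,\psi_0)$ with $\psi_0$ the standard form. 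Now $Um_4(A)/\Sp A=0$ enters: the isomorphism carries the adjoined copy of $H$ onto a hyperbolic pair $(e',f')$ in $A^4$, and $e'$ is unimodular because $\psi_0(e',f')=1$, so there is $g\in\Sp A$ with $g(e')=e_1$; then $P$ is carried isomorphically onto $\langle e_1,f''\rangle^\perp$ with $f''=g(f')$ and $\psi_0(e_1,f'')=1$, and a one-line computation exhibits this orthogonal complement as free (with $\psi_0(e_1,e_3)=\psi_0(e_2,e_4)=1$ and $f''=\sum f_i''e_i$, the vectors $e_2-f_4''e_1$ and $e_4+f_2''e_1$ are a basis). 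Hence $P$ is free. I expect the crux to be exactly this descent from ``stably hyperbolic'' to ``$(P,\psi)\perp H\cong H^2$'': it is the one place where dimension $3$ is essential, through the symplectic cancellation theorem, and one must check that cancellation is available at every intermediate rank but genuinely breaks down at rank $2$ --- which is precisely why $Um_4(A)/\Sp A$, rather than an orbit set of longer unimodular rows, is the correct second invariant. The remaining ingredients --- the symplectic/projective dictionary, Serre's theorem, splitting off hyperbolic planes, and transitivity of $SL_2$ on hyperbolic pairs of a symplectic plane --- are routine.
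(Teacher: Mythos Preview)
Your proof is correct and follows essentially the same route as the paper. Both directions hinge on the same two facts---symplectic cancellation in dimension $3$ (the paper cites Bass, Ch.~IV, Cor.~4.15; you invoke it by name) to reduce to $(P,\psi)\perp H\cong H^2$, and the identification of the remaining obstruction with the $Sp_4$-orbit of a unimodular row. The only difference is packaging: where the paper appeals to its Propositions~3.4 and~4.2 (associating to $v\in Um_4$ the symplectic pair $(Q(v),\phi(v))$ and showing isometry classes correspond to $Sp_4$-orbits), you carry out the equivalent computation by hand, explicitly exhibiting $\langle e_1,f''\rangle^\perp$ as free; and for necessity, where the paper quotes Bass to say every class in $\K A$ is an $e(P)$, you instead argue directly via Serre's splitting theorem that every symplectic module over $A$ is hyperbolic.
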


The group $\K A$ appearing in the theorem is just the Grothendieck-Witt group $GW^2(A)$ modulo the subgroup generated by the antisymmetric form $H(A)$ and $Um_4(A)/\Sp A$ is just the set of orbits of $Um_4(A)$ under the action of $\Sp A$. The first group is "computable" in the sense that it is part of a cohomology theory.

As an application of the above result, we get the following theorem (Theorem \ref{main} in the text):

\begin{thm*}
Every finitely generated projective module over the real algebraic sphere of dimension $3$ is free.
\end{thm*}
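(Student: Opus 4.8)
The plan is to verify the two conditions of Theorem~\ref{cancellation} for the coordinate ring $A$ of the real algebraic $3$-sphere — $\K{A}=0$ and $Um_4(A)/\Sp A=0$ — and then to bootstrap from the rank-$2$ statement to projective modules of all ranks. Recall that $A=\R[x_0,x_1,x_2,x_3]/(x_0^2+x_1^2+x_2^2+x_3^2-1)$ is a smooth affine $\R$-algebra of Krull dimension $3$, and that it is a form of $SL_2$: setting $a=x_0+\sqrt{-1}\,x_1$ and $b=x_2+\sqrt{-1}\,x_3$ identifies $A$ with the coordinate ring of $SU(2)=SL_1(\mathbb{H})$, so that $A\otimes_\R\C\cong\C[SL_2]$. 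In particular the real points of $\spec A$ form the parallelizable topological $3$-sphere, so the theorem says that there are no algebraic projective modules beyond the (topologically obvious) free ones, and I expect the quaternionic structure on $S^3$ to be the ultimate source of every vanishing below.

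Granting the two conditions, the bootstrap goes as follows. Rank-$1$ projectives are free because $\mathrm{Pic}(A)=0$; this is classical for the spheres $A_n$ with $n\geq2$, and may also be obtained by descent from the factoriality of $\C[SL_2]$. In particular every rank-$2$ projective has trivial determinant, so the conclusion of Theorem~\ref{cancellation} applies to all of them. A projective of rank $\geq4$ acquires a free direct summand by Serre's splitting theorem, reducing the problem to rank $3$. For a rank-$3$ projective $P$ — which has trivial determinant, again by $\mathrm{Pic}(A)=0$ — I would argue that $P$ is stably free (using $\widetilde K_0(A)=0$, which is a companion computation), then cancel free summands of rank $>3$ by Bass's theorem to reach $P\oplus A\cong A^4$, so that $P$ is the kernel of the surjection $A^4\to A$ given by a unimodular row $v\in Um_4(A)$; since $\Sp A\subseteq SL_4(A)$, the vanishing $Um_4(A)/\Sp A=0$ forces $v$ into the $SL_4(A)$-orbit of $(1,0,\ldots,0)$, so $P$ is free. (Alternatively, rank-$3$ can be treated directly through Euler classes: $P$ is free as soon as its Euler class vanishes, and the receiving group vanishes for the real $3$-sphere — essentially the same computation needed below.)

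It remains to establish $\K{A}=0$ and $Um_4(A)/\Sp A=0$. Symplectic invariants are the relevant ones because a rank-$2$ projective of trivial determinant carries a nondegenerate alternating form coming from $\wedge^2P\cong A$. For $\K{A}=GW^2(A)/\langle H(A)\rangle$ I would either run the Gersten–Grothendieck–Witt spectral sequence on $\spec A$ — whose $E_1$-page is built from Witt groups of residue fields of points, with the closed non-real points contributing copies of $W(\C)=\Z/2$ and the real points copies of $W(\R)=\Z$ — or use the complexification: the restriction–corestriction composition for $\R\subseteq\C$ shows that the kernel of $\K{A}\to\K{A\otimes_\R\C}$ is killed by $2$, while $\K{A\otimes_\R\C}=\K{\C[SL_2]}$ vanishes because $SL_2$ is $\A^1$-equivalent to $\A^2\setminus 0$ and symplectic Grothendieck–Witt groups are $\A^1$-invariant on regular rings, and one then removes the residual $2$-primary part using that the pertinent topological symplectic $K$-group of $S^3$ vanishes ($\pi_2(Sp)=0$). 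For $Um_4(A)/\Sp A$ I would use, for $A$ smooth of dimension $3$, the description of $Um_4(A)/E_4(A)$ as a cohomology group of $\spec A$ (the group $H^3(A,G^4)$ of the introduction, of Chow–Witt type), pass through its group structure and the quotient by the image of $\Sp A/E_4(A)$, and reduce to the vanishing of that cohomology group for the real $3$-sphere — again comparing against $\C[SL_2]$, where by dimension $Um_4$ is a single stable orbit. The row $(x_0,x_1,x_2,x_3)$ is the instructive example: it is the first column of the matrix in $SO_4(A)$ of left multiplication by the quaternion $x_0+x_1i+x_2j+x_3k$, hence is already completable.

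The main obstacle is exactly this computation of the two obstruction groups for $A$. The difficulty is that $A$ is an \emph{anisotropic} quadric over $\R$: unlike the split quadric it is not $\A^1$-equivalent to $\A^2\setminus 0$, so neither its Grothendieck–Witt groups nor its Chow–Witt-type cohomology are formal consequences of the split case, and the computation has to carry along the contribution of the real points, where $GW(\R)=\Z\oplus\Z$ rather than $\Z$ sits in the Rost–Schmid complex. The content of the argument is then to show that what survives from this real contribution is annihilated — by the symplectic action, respectively by the van der Kallen group structure on $Um_4(A)/E_4(A)$ — and the geometric input that makes this work is precisely the quaternionic trivialization of the tangent bundle of the $3$-sphere, which I expect to enter explicitly at this point as well.
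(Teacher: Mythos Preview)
Your global architecture matches the paper's: reduce to the two hypotheses of Theorem~\ref{cancellation} and handle the other ranks separately (the paper invokes \cite[Proposition~5.11]{Fa4} for rank~$3$ rather than your $Um_4/\Sp A$ route, but either works once that set is known to be trivial).

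For $\K{S^3}=0$ your approach is genuinely different from the paper's, and the sketch is not yet a proof. The paper does not go through complexification or topology at all: it identifies $\K{S^3}=W^2(S^3)=H^2(S^3,W)$ and computes this group directly in the Gersten--Witt complex. Concretely, it localizes at $x_3$ via the isomorphism $S^3_{x_3}\cong B^3_{x_2+x_3}$ with $B^n=\R[x_0,\ldots,x_n]/(\sum_{i\le n-2}x_i^2-x_{n-1}x_n+1)$, computes $H^i(B^n,W)$ and $H^i(C^n,W)$ for the auxiliary anisotropic quadrics $C^n=\R[x_0,\ldots,x_n]/(\sum x_i^2+1)$ by further localization, and then tracks an explicit cycle through the boundary map $H^1(S^3_{x_3},W)\to H^2_{x_3}(S^3,W)\cong H^1(S^2,W)$ to show it is surjective. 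Your restriction--corestriction idea does give that $\ker\bigl(\K{S^3}\to\K{\C[SL_2]}\bigr)$ is $2$-torsion, but the step ``remove the residual $2$-primary part using $\pi_2(Sp)=0$'' presupposes a comparison between algebraic $GW^2$ (or $W^2$) over $\R$ and topological symplectic $K$-theory that you have not supplied and which is not formal.

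For $Um_4(S^3)/\Sp{S^3}$ there is a real gap. The cohomology group $Um_4(S^3)/E_4(S^3)$ is \emph{not} shown to vanish, and comparison with $\C[SL_2]$ will not make it vanish; the paper only uses that it is \emph{cyclic}, generated by the class of $v_0=(x_0,x_1,x_2,x_3)$ (this is the input from \cite{Fa4}). The decisive point is not that $v_0$ is ``completable'' to $SO_4$ or $SL_4$ --- that only gives $Um_4/SL_4=\ast$ --- but that the quaternion matrix $M$ you have in mind lies in $\Sp{S^3}$ for the form $H(A)\bot H(A)$, with first row $v_0$. From there the argument is: the first-row map $r:SL_4/E_4\to Um_4/E_4$ is a group homomorphism (van der Kallen), so every class is $r(M^n)$ for some $n\in\Z$; hence $vM^{-n}\in e_1E_4(S^3)$, and Vaserstein's theorem $e_1E_4=e_1ESp_4$ then gives $v\in e_1\Sp{S^3}$. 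Your sketch has the quaternion matrix on the table but puts the weight in the wrong place: the obstruction is not annihilated by a vanishing of $H^3(A,G^4)$, it is annihilated because the generator of that (a priori nontrivial) cyclic group lifts to $\Sp{S^3}$.
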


It should be pointed out that the stable structure of the algebraic vector bundles on spheres is well-known (\cite{Fo}; \cite{Sw2}), but that the set of isomorphism classes of such vector bundles is much more difficult to compute. At the moment, this is known only for the circle $S^1$ (obvious) and the two-dimensional sphere $S^2$ (\cite{BO}). The result on $S^3$ could be thought of as a measure of the recent progresses made in understanding the unstable structure of projective modules. 

The paper is organized as follows: In Section \ref{symplectic}, we recall a few basic facts about symplectic modules. This includes the definitions of the group $\K A$, and the definition of the Euler class of a projective module of rank $2$. The following section is devoted to the study of the set of unimodular rows of even length $n$ under the action of the symplectic group $Sp_n(A)$. We show that a unimodular row of even length $n$ gives a stably free module of rank $n-2$ endowed with an antisymmetric form. This form is stably isometric to $H(A^{n/2})$ (whose class in $\K A$ is therefore trivial) and becomes isometric to $H(A^{(n-2)/2})$ if and only if its associated unimodular row is in the $Sp_{n}(A)$-orbit of $(1,0,\ldots,0)$. We prove the theorem on the freeness of projective modules of rank $2$ in Section \ref{core}. The basic (and classical) observation here is that every projective module of rank $2$ is endowed with a non-degenerated antisymmetric form. In Section \ref{gwss}, we recall the definition and the basic properties of the Gersten-Witt complex.  The last section shows that the group $\K A$ and the set $Um_4(A)/Sp_4(A)$ involved in the first theorem vanish when $A=S^3$ is the coordinate ring of the real algebraic sphere of dimension $d$. Since $\K {S^3}$ coincide with the derived Witt group $W^2(S^3)$ as defined by Balmer (\cite{BalSurvey}), we compute the latter. It must be pointed out that C. Walter announced the computation of the derived Witt groups of quadrics, and therefore we only compute the groups needed in our main result without going to much into the details. Our computation is rather technical, and probably one can find a better approach. We conclude with the proof of our theorem that all the finitely generated projective modules on $S^3$ are free.

\subsection{Conventions}
All the rings considered are commutative and noetherian. The dimension of a ring will always be its Krull dimension.
If $X$ is a scheme and $x\in X$, we denote by $\kappa(x)$ the residue field of $x$. If $X$ is integral, we simply denote by $\kappa(X)$ the residue field at the generic point.

%%%%%%%%%%%%%%%%%%%%%%%%%%%%%%%%%%%%%%%%%%%%%%%%%%%%%%%%%%%%%%%%%%%%%%%%%%%%%%%%%%%%%%%%%%%%%%%%%%%%%%%%%%%%%%%%%

\section{Symplectic modules}\label{symplectic}

\subsection{The group $\Ko AL$}
Let $A$ be a commutative ring and let $L$ be a line bundle over $A$. Let $GW^2(A,L)$ be the Grothendieck-Witt group of the (exact) category of projective $A$-modules endowed with the duality $\homm A{\_}L$ and the usual canonical isomorphism $\omega:Id\to \homm A{\homm A{\_}L}L$. Explicitly, $GW^2(A,L)$ is the Grothendieck group of the monoid (with the orthogonal sum $\bot$ as operation) of isometry classes of pairs $(P,\phi)$, where $P$ is a projective $A$-module and $\phi:P\to \homm APL$ is an anti-symmetric isomorphism.

If $Q$ is a projective $A$-module, we define $H(Q)$ to be the pair $(Q\oplus \homm AQL,h)$, where 
$$h:Q\oplus \homm APL\to \homm AQL\oplus \homm A{\homm APL}L$$ 
is defined by $h(q,f)(q^\prime,f^\prime)=f(q^\prime)-f^\prime(q)$. This induces a homomorphism 
$$H:K_0(A)\to GW^2(A,L)$$ 
whose cokernel is the Witt group $W^2(A,L)$.

We denote by $\Ko AL$ the quotient of $GW^2(A,L)$ by the subgroup generated by $H(A)$. If $L=A$, we simply denote by $\K A$ the group $\Ko AA$.

\subsection{Euler classes}

Let $P$ be a projective module of rank $2$. Then $P$ is endowed with an anti-symmetric form $\chi:P\to \homm AP{\det P}$ defined by $\chi(p)(q)=p\wedge q$. We denote by $e(P)$ the class of $(P,\chi)$ in $\Ko A{\det P}$. Suppose that $f:P\to Q$ is an isomorphism of (rank $2$) projective modules. Then $\det f$ induces an isomorphism $\Ko A{\det P}\to \Ko A{\det Q}$ under which $e(P)$ is sent to $e(Q)$. The Euler class satisfies good functorial properties, as shown in \cite[\S 2.4, \S 2.5]{FS}. In particular,

\begin{prop}\label{vanishing}
Suppose that $P\simeq (\det P)\oplus A$. Then $e(P)=0$.
\end{prop}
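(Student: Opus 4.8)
The plan is to prove something slightly stronger, namely that the pair $(P,\chi)$ is already isometric to $H(A)$ in $GW^2(A,\det P)$; since by definition $\Ko A{\det P}$ is the quotient of $GW^2(A,\det P)$ by the subgroup generated by $H(A)$, this immediately yields $e(P)=0$.

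Set $L=\det P$ and fix an isomorphism $f\colon P\xrightarrow{\sim}L\oplus A$. First I would make explicit the canonical isomorphism $c\colon\det(L\oplus A)\xrightarrow{\sim}L$: as $L$ and $A$ are line bundles, $\wedge^2L=\wedge^2A=0$, so $\wedge^2(L\oplus A)=L\wedge A$, and this is identified with $L\otimes A\simeq L$ by $\ell\wedge a\mapsto a\ell$. Transporting $\chi$ along $f$ and then along $c$, and using the naturality of $\det$ (so that $\det f(x\wedge y)=f(x)\wedge f(y)$), one obtains on $L\oplus A$ the $L$-valued form $\chi'$ with $\chi'\big((\ell,a),(\ell',a')\big)=c\big((\ell,a)\wedge(\ell',a')\big)=c(\ell\wedge a'-\ell'\wedge a)=a'\ell-a\ell'$. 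Since the Grothendieck--Witt groups are functorial in the line bundle and $c\circ\det f\colon\det P\to L$ is an isomorphism, the class $e(P)\in\Ko A{\det P}$ corresponds under the resulting isomorphism $\Ko A{\det P}\simeq\Ko AL$ to the class of $(L\oplus A,\chi')$.

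On the other side, identify $\homm AAL$ with $L$ via evaluation at $1$ (inverse $\ell\mapsto(a\mapsto a\ell)$). Under this identification the form $h$ of $H(A)=(A\oplus\homm AAL,h)$ becomes $\bar h\big((a,\ell),(a',\ell')\big)=a'\ell-a\ell'$. Hence the swap $(\ell,a)\mapsto(a,\ell)$ is an isometry $(L\oplus A,\chi')\xrightarrow{\sim}(A\oplus L,\bar h)\simeq H(A)$ in $GW^2(A,L)$, so the class of $(P,\chi)$ equals that of $H(A)$ and therefore $e(P)=0$ in $\Ko A{\det P}$.

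The argument is essentially bookkeeping, and the only point requiring care is the compatibility of the various canonical identifications ($\wedge^2(L\oplus A)\simeq L$, $L\otimes A\simeq L$, $\homm AAL\simeq L$) with the double-duality isomorphisms $\omega$; checking this is routine and there are no hidden sign obstructions. Alternatively, the statement follows formally from the functoriality of the Euler class established in \cite[\S 2.4, \S 2.5]{FS}, but the direct computation above is self-contained.
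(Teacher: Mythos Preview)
Your argument is correct: you show directly that the canonical antisymmetric form on $L\oplus A$ (with $L=\det P$), after identifying $\det(L\oplus A)\simeq L$, is isometric to the hyperbolic form $H(A)$ on $A\oplus\homm AAL$, and hence $e(P)$ vanishes in $\Ko A{\det P}$. The only delicate point, which you flag yourself, is that the various canonical identifications are compatible with the double-dual isomorphisms; this is indeed routine.

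The paper does not actually prove this proposition. It merely records it as a particular instance of the functorial properties of the Euler class established in \cite[\S 2.4, \S 2.5]{FS}, and leaves the verification to that reference. Your proof is therefore not the same as the paper's (there is none), but rather the direct computation that underlies the cited functoriality. What you gain is a self-contained argument requiring no outside input beyond the definitions in Section~\ref{symplectic}; what the citation buys is brevity and the observation that the result is part of a more general package (e.g.\ the behaviour of $e$ under the isomorphism $\Ko A{\det P}\to\Ko A{\det Q}$ induced by any $f\colon P\simeq Q$, which you in fact also use). Your closing remark already makes this comparison, so nothing needs to be added.
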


If $A$ is of dimension $2$, then the proposition is the easy part of a stronger result (\cite[Proposition 23]{FS}):

\begin{thm}
Let $A$ be a ring of dimension $2$. Then $e(P)=0$ if and only if $P\simeq (\det P)\oplus A$. 
\end{thm}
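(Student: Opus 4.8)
The forward implication is exactly Proposition~\ref{vanishing}, so I would concentrate on the converse. Writing $L=\det P$, the plan is to promote the hypothesis $e(P)=0$ to a genuine isometry $(P,\chi)\simeq H(A)$ and then read off the module splitting. The first step unwinds the definition: $e(P)=0$ says that the class of $(P,\chi)$ lies in the cyclic subgroup of $GW^2(A,L)$ generated by $H(A)$, so $[(P,\chi)]=m[H(A)]$ in $GW^2(A,L)$ for some $m\in\Z$; comparing the ranks of the underlying modules ($2=2m$) forces $m=1$, whence $[(P,\chi)]=[H(A)]$ in $GW^2(A,L)$. By the construction of the Grothendieck group of the monoid of symplectic spaces this yields a symplectic space $\gamma$ with $(P,\chi)\bot\gamma\simeq H(A)\bot\gamma$.

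Second, I would replace $\gamma$ by a hyperbolic module. Every symplectic space satisfies $\gamma\bot(-\gamma)\simeq H(M)$, where $M$ is the module underlying $\gamma$ (the diagonal is a Lagrangian, and a Lagrangian of a non-degenerate form is automatically a direct summand because the relevant $\mathrm{Ext}^1$ vanishes), so adding $(-\gamma)$ gives $(P,\chi)\bot H(M)\simeq H(A)\bot H(M)=H(A\oplus M)$; completing $M$ to a free module and adding the corresponding hyperbolic space produces an isometry
\[(P,\chi)\bot H(A^{n})\simeq H(A^{n+1})\]
for some $n$, and $n$ may be taken as large as we like.

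The substantive step is the cancellation. Rewriting $H(A^{n+1})=H(A)\bot H(A^{n})$ and cancelling the $n$ copies of $H(A)$ one at a time should give $(P,\chi)\simeq H(A)$. This is exactly where $\dim A=2$ enters, and it is the main obstacle: stable isometry to a hyperbolic form is cheap, but over a $2$-dimensional base one must genuinely descend all the way to rank $2$, so the last cancellations are between symplectic spaces of rank $4$ and one needs that $H(A)$ can be cancelled from isometries of symplectic spaces of rank $4$ over a $2$-dimensional ring — the symplectic analogue of Bass cancellation, in its $L$-twisted form. This lies in the same circle of ideas as the results on $Um_{n}(A)/Sp_{n}(A)$ exploited in Section~\ref{core}, and is available from \cite{FS}; making this step precise (including the bookkeeping with the twist by $L$) is the only real work.

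Finally, an isometry $(P,\chi)\simeq H(A)=(A\oplus L,h)$ is in particular an $A$-module isomorphism $P\simeq A\oplus L=A\oplus\det P$, as desired. An alternative, more geometric route would be to choose a generic section of $P$: since $\mathrm{rank}\,P=\dim A$ it vanishes on a finite set of points, $e(P)$ can be described as the obstruction attached to these local data, and an addition/subtraction-of-points argument in the spirit of the Euler class group would homotope the section to a nowhere-vanishing one; but the Grothendieck--Witt argument above is shorter given the machinery already in place.
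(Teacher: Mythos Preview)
The paper does not actually prove this theorem: it is quoted verbatim from \cite[Proposition 23]{FS}. So there is no in-text argument to compare your proposal against; you are essentially reconstructing the argument behind the citation.

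Your outline is the correct one and matches the strategy used later in Section~\ref{core} for dimension~$3$. A couple of points are worth sharpening. First, your rank computation forcing $m=1$ tacitly assumes $\spec A$ is connected; otherwise argue componentwise. Second, and more importantly, you should be explicit about how far generic symplectic cancellation actually reaches. Bass's result \cite[Chapter IV]{Bass2} lets you cancel $H(A)$ as long as the remaining form has rank $>d$, so for $d=2$ it brings you only to $(P,\chi)\bot H(A)\simeq H(A^2)$, exactly as in the $d=3$ case treated in Section~\ref{core}. The final cancellation from rank $4$ to rank $2$ is then equivalent, via Proposition~\ref{sympl}, to transitivity of $Sp_4(A)$ on $Um_4(A)$; this holds for $d=2$ because $sr(A)\leq 3$ forces $E_4(A)$ to act transitively on $Um_4(A)$, and Vaserstein's theorem upgrades this to $Sp_4(A)$. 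You gesture at this (``the same circle of ideas as the results on $Um_n(A)/Sp_n(A)$''), but it is precisely the content of the step, not bookkeeping. Finally, your caveat about the twist by $L$ is well placed: the constructions in Section~\ref{unimodular} are written for $L=A$, and carrying them over to general $L$ is where the actual work in \cite{FS} lies; calling it bookkeeping undersells it.

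Your closing ``alternative route'' via a generic section and the Euler class group is in fact closer to how \cite{FS} presents the result, so it is not really an alternative but the original.
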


If $A$ is of dimension $3$, then the above result is no longer true, and even for a stably free module of rank $2$ we need another obstruction to detect whether it is free or not. This is the object of the next section.

%%%%%%%%%%%%%%%%%%%%%%%%%%%%%%%%%%%%%%%%%%%%%%%%%%%%%%%%%%%%%%%%%%%%%%%%%%%%%%%%%%%%%%%%%%%%%%%%%%%%%%%%%%%%%%%%%

\section{Unimodular rows under the symplectic group}\label{unimodular}

\subsection{Unimodular rows}

A unimodular row of length $n$ is a surjective homomorphism $A^n\to A$. Equivalently, a unimodular row is represented by a set $\uni na$ of elements of $A$ which generate the ideal $A$. The group $GL_n(A)$ acts on $Um_n(A)$ by composition, and therefore any subgroup of $GL_n(A)$ also acts. Traditionally, one is interested either in $E_n(A)$ (the subgroup generated by the elementary matrices) or in $GL_n(A)$ itself. Indeed, the set of orbits $Um_n(A)/GL_n(A)$ detects the stably free non free modules of rank $n-1$ as follows:

Let $v=\uni na$ be a unimodular row and let $P(v)$ be the projective module defined by the exact sequence
$$\xymatrix@C=2em{0\ar[r] & P(v)\ar[r] & A^n\ar[r]^-{v} & A\ar[r] & 0.}$$
It is easily seen that the following proposition holds:

\begin{prop}\label{free}
The modules $P(v)$ and $P(v^\prime)$ are isomorphic if and only if there is a matrix $G\in GL_n(A)$ such that $vG=v^\prime$.
\end{prop}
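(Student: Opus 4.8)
The plan is to extract both implications directly from the defining exact sequences, the one essential input being that these sequences split because $A$ is a free, hence projective, $A$-module.

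For the direction ``$vG=v'$ for some $G\in GL_n(A)$'' implies ``$P(v)\simeq P(v')$'', I view $v$ and $v'$ as the maps $A^n\to A$ they represent and $G$ as the corresponding automorphism of $A^n$. The relation $vG=v'$ then says precisely that the square
$$
\xymatrix@C=2em{A^n\ar[r]^-{v'}\ar[d]_-{G} & A\ar@{=}[d]\\ A^n\ar[r]^-{v} & A}
$$
commutes. Hence $G$ carries the submodule $P(v')\subset A^n$ into $P(v)\subset A^n$ (if $v'(x)=0$ then $v(Gx)=v'(x)=0$), so it restricts to a morphism $P(v')\to P(v)$; since $G$ and $\mathrm{id}_A$ are isomorphisms, the short five lemma applied to the two short exact sequences shows that this restriction is an isomorphism.

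For the converse, suppose $\phi\colon P(v')\xrightarrow{\ \sim\ } P(v)$ is an isomorphism. I first split the sequence $0\to P(v)\to A^n\xrightarrow{v}A\to 0$: choosing $w\in A^n$ with $v(w)=1$, the submodule $Aw$ is free of rank one (if $aw=0$ then $a=v(aw)=0$), the map $v$ restricts to an isomorphism $Aw\xrightarrow{\ \sim\ }A$, and one checks $A^n=P(v)\oplus Aw$ via $x=(x-v(x)w)+v(x)w$. Using $v|_{Aw}$ to identify $A^n=P(v)\oplus A$, the map $v$ becomes the projection onto the second factor; the same construction applied to $v'$ gives $A^n=P(v')\oplus A$ with $v'$ the second projection. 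Then $G:=\phi\oplus\mathrm{id}_A$ is an automorphism of $A^n$ satisfying $vG=v'$ by construction, and this is the desired element of $GL_n(A)$.

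There is no serious difficulty here; the only point that deserves attention — and the place where a careless argument could slip — is to arrange the two splittings so that each free complement is mapped isomorphically onto $A$ by the relevant structure map. This is exactly what the choice of $w$ with $v(w)=1$ (resp. of $w'$ with $v'(w')=1$) guarantees, and it is what promotes the bare isomorphism of kernels to an automorphism of $A^n$ intertwining $v$ and $v'$. Everything else is formal diagram chasing.
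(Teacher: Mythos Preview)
Your proof is correct; the paper itself does not give a proof, stating only that ``it is easily seen that the following proposition holds''. Your argument is the standard verification one would supply for this fact, so there is nothing to compare.
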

This proposition also shows that the set $Um_n(A)/GL_n(A)$ is quite hard to compute in general. If one restricts to $E_n(A)$, the situation is slightly better, as shown by the next result due to W. van der Kallen (\cite[Theorem 4.1]{vdk}):

\begin{thm}
Let $A$ be a ring of dimension $d$. If $n\geq (d+4)/2$ then $Um_n(A)/E_n(A)$ has the structure of an abelian group. 
\end{thm}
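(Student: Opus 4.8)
The plan is to follow van der Kallen's route: realize the group law as a Mennicke-symbol-type multiplication of unimodular rows that have first been brought into a normalized position, the normalization being exactly what the hypothesis allows. Observe that $n\geq(d+4)/2$ is the same as $d\leq 2(n-2)$, equivalently $n-1\geq d/2+1$: the rows are ``long enough'' relative to $\dim A$. The operation I would define is this. Given two classes in $Um_n(A)/E_n(A)$, represent them by rows $v=(a_0,a_1,\ldots,a_{n-1})$ and $w=(b_0,a_1,\ldots,a_{n-1})$ sharing the same last $n-1$ coordinates---the common \emph{denominator} $J=(a_1,\ldots,a_{n-1})$---and set $[v]\cdot[w]:=[(a_0b_0,a_1,\ldots,a_{n-1})]$. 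Since $v$ and $w$ are unimodular, $a_0$ and $b_0$ are units modulo $J$, hence so is $a_0b_0$, so $(a_0b_0,a_1,\ldots,a_{n-1})$ is again unimodular and the formula at least makes sense---provided any two classes admit representatives with a common denominator.

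So the first step is a normalization lemma: any two unimodular rows of length $n$ are carried by elements of $E_n(A)$ to rows with the same last $n-1$ coordinates. The tools are Bass's stable range condition $SR_{d+1}(A)$ together with the Suslin--Vaserstein ``general position'' manoeuvres that become available precisely when $n-1>d/2$: one may add multiples of the first coordinate to the tail, and---more to the point---adjust the tail modulo sufficiently generic elements of $A$ so as to match a prescribed (in general non-unimodular) denominator coming from the second row, keeping unimodularity throughout by a Bertini/avoidance argument on $\spec A$. This is where the length of the rows is genuinely used. I would carry out the avoidance argument first over the relevant localizations and then patch.

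The main obstacle is well-definedness: the class $[(a_0b_0,a_1,\ldots,a_{n-1})]$ must be independent of the chosen common-denominator representatives and of how the normalization was performed. This is van der Kallen's ``key lemma''. I would prove it by a homotopy/patching argument in the Quillen--Suslin spirit: two competing choices differ by an element of $E_n$, one interpolates between the two product rows by an explicit unimodular row over $A[T]$, and one then invokes a local--global principle (a unimodular row over $A[T]$ of length in the stable range whose specializations are locally $E_n$-trivial is $E_n$-trivial) to conclude that the specializations at $T=0$ and $T=1$ are $E_n(A)$-equivalent. The dimension bound re-enters here to ensure both the homotopies and the local--global principle are available for rows of this length. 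The same lemma yields the excision relation $[v]=[v']$ whenever $v,v'$ have a common denominator $J$ and first coordinates congruent modulo $J$, which streamlines everything that follows.

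It then remains to check the axioms. Commutativity is essentially free once well-definedness is known, as $a_0b_0=b_0a_0$. The identity is $[(1,0,\ldots,0)]$, because $(1,a_1,\ldots,a_{n-1})$ is always $E_n$-trivial. The inverse of $[(a_0,a_1,\ldots,a_{n-1})]$ is represented by $(a_0',a_1,\ldots,a_{n-1})$ for any lift $a_0'$ of $a_0^{-1}\bmod J$: the product is $(a_0a_0',a_1,\ldots,a_{n-1})$ with $a_0a_0'\equiv 1\bmod J$, hence $E_n$-trivial by excision. Associativity is the one axiom that still demands real bookkeeping: one brings three classes \emph{simultaneously} to a common denominator (the normalization lemma applied iteratively, the discrepancies absorbed by the key lemma) and then uses $(a_0b_0)c_0=a_0(b_0c_0)$. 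I expect the normalization-plus-key-lemma package to absorb all of this, the key lemma being by far the technical heart of the argument.
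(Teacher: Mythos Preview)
The paper does not prove this theorem at all: it is merely quoted, with attribution to van der Kallen \cite[Theorem 4.1]{vdk}, and then used as background. There is therefore no ``paper's own proof'' to compare your proposal against.

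That said, your sketch is a reasonable outline of van der Kallen's actual argument: the operation is indeed defined via common-denominator representatives, the existence of such representatives uses the dimension bound, and the bulk of the work is the well-definedness (``key'') lemma. One caveat: your description of the key lemma via a Quillen--Suslin style local--global principle over $A[T]$ is not how van der Kallen proceeds; his argument is more combinatorial, using repeated applications of stable range and excision-type manipulations rather than polynomial homotopies. If you intend to actually write out a proof, you should consult \cite{vdk} directly rather than rely on the $A[T]$-interpolation heuristic, which would need independent justification at this range.
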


Of course, this doesn't imply that $Um_n(A)/E_n(A)$ becomes suddenly easy to compute. However, in the special situation when $A$ is a smooth algebra of dimension $d$ over a field of characteristic different from $2$, it is shown in \cite{Fa4} that $Um_{d+1}(A)/E_{d+1}(A)$ (same $d$) has a cohomological interpretation ([loc. cit., Theorem 4.9]). This led to the computation of those groups for any smooth rational oriented real algebra ([loc. cit.,Theorem 5.6]).  

Suppose now that $n$ is even. Then $A^n$ is the underlying module of the antisymmetric form $h:A^n\simeq (A^n)^\vee$ which is $\bot_{n/2}H(A)$. Any unimodular row $v$ of length $n$ gives a stably free module of rank $n-2$ endowed with an antisymmetric form as follows:

The exact sequence 
$$\xymatrix{0\ar[r] & P(v)\ar[r]^-i & A^n\ar[r]^-v & A\ar[r] & 0}$$
yields a commutative diagram
$$\xymatrix{0\ar[r] & A^\vee\ar[r]^-{v^\vee}\ar@{-->}[d]_-s & (A^n)^\vee\ar[r]^-{i^\vee}\ar[d]_-{h^{-1}} & P(v)^\vee\ar[r]\ar@{-->}[d]_-{-s^\vee} & 0 \\
0\ar[r] & P(v)\ar[r]_-i & A^n\ar[r]_-v & A\ar[r] & 0}$$ 
Since $h$ is antisymmetric $vh^{-1}v^\vee=0$ and therefore we get a homomorphism $s$ making the diagram commutative. Observe that $\ker {s^\vee}$ is projective and equal to $\coker s^\vee$. The snake lemma gives an isomorphism $\coker s\to \ker s^\vee$ which is antisymmetric.

\begin{defin}
If $v$ is a unimodular row of even length $n$, we denote by $(Q(v),\phi(v))$ the antisymmetric pair obtained above.
\end{defin}

\begin{lem}\label{isometry}
There is an isometry $(Q(v),\phi(v))\bot H(A)\simeq  H(A^{n/2})$. 
\end{lem}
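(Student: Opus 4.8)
The plan is to read off the isometry directly from the commutative diagram constructed just before the statement. Recall that $Q(v) = \coker s$ and, via the snake lemma isomorphism, also $Q(v) = \ker s^\vee = \coker s^\vee$, with $\phi(v)$ the induced antisymmetric form. The key observation is that the vertical map $h^{-1}\colon (A^n)^\vee \to A^n$ is itself an isometry for the standard hyperbolic form $\bot_{n/2}H(A)$, and that $s$ and $s^\vee$ fit into the short exact sequences displayed in the diagram. So the whole diagram should be viewed as an exact sequence of antisymmetric spaces (in the derived/metabolic sense): $A$ sits as a sublagrangian-type piece of $H(A^{n/2})$, its ``orthogonal complement modulo itself'' is $Q(v)$, and the extra copy of $A$ paired with it via the snake map produces precisely one hyperbolic summand $H(A)$.

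Concretely, I would argue as follows. First, consider the map $\psi = (v^\vee, i)\colon A^\vee \oplus P(v) \to A^n$ coming from the two horizontal rows of the diagram; using $\ker s^\vee = \coker s$ one identifies $Q(v)$ as a subquotient $i^{-1}(\mathrm{im}\, v^\vee)^{\perp}/\cdots$. Rather than chase this abstractly, the cleanest route is: form the orthogonal sum $(Q(v),\phi(v)) \bot H(A)$, and exhibit an explicit isometry onto $(A^n, h) = H(A^{n/2})$ by lifting generators. The copy of $A$ inside $H(A)$, say $Ae$, maps to a vector $w \in A^n$ with $h(w,w)=0$ lifting a generator of $\mathrm{coker}(i^\vee\circ h^{-1})$ suitably; the dual copy $Ae^*$ maps to a vector $w'$ with $h(w,w')=1$; and $Q(v)$ maps to $w^\perp \cap (w')^\perp$ inside $A^n$, which is a free rank $n-2$ antisymmetric space on which $\phi(v)$ is by construction the restriction of $h$. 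Because $v$ is unimodular, $w$ can be chosen unimodular, hence $\langle w, w'\rangle$ splits off a hyperbolic plane $H(A)$ as an orthogonal direct summand of $(A^n,h)$, and the complement is $(Q(v),\phi(v))$. Since $(A^n,h)\simeq H(A^{n/2})$, this gives the claimed isometry.

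Alternatively, and perhaps more in keeping with the diagrammatic setup, one can prove it purely homologically: the lower row $0\to P(v)\to A^n\to A\to 0$ together with the isometry $h^{-1}$ shows that $A^n \simeq H(A^{n/2})$ admits $P(v)$ as the kernel of a map to $A$, and the map $s\colon A^\vee \to P(v)$ records how the natural map $A^\vee \to A^n \to A^n/P(v)$ behaves. The cokernel $Q(v)$ carries the induced form, and the ``defect'' measured by $s$ — i.e. the fact that $A^\vee \hookrightarrow A^n$ is not isotropic-complemented on the nose but only after adding back its image — contributes exactly $H(A)$. This is a standard ``adding a metabolic/hyperbolic summand to correct a sublagrangian'' argument; in the literature on $GW$-groups this is the statement that a sublagrangian inclusion $L \subset (M,\phi)$ with $L$ a summand yields $(M,\phi) \simeq H(L) \bot (L^\perp/L, \bar\phi)$, applied to $L = A^\vee = \mathrm{im}(v^\vee)$ inside $(A^n, h)$.

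The main obstacle will be verifying that $s$ can be chosen so that $\mathrm{im}(v^\vee)$ is genuinely a \emph{direct summand} of $A^n$ (equivalently, that the relevant lift is unimodular), and then checking carefully that the antisymmetric form induced on the subquotient $\coker s = \ker s^\vee$ via the snake lemma agrees on the nose with $\phi(v)$ as defined — signs and the identification $\coker s^\vee \cong \ker s^\vee$ need to be tracked precisely. Once the sublagrangian $\mathrm{im}(v^\vee)$ is seen to be a free rank-one summand, everything else is the routine orthogonal-splitting computation, and the formula $h(q,f)(q',f') = f(q')-f'(q)$ for $H(A)$ matches the pairing produced by the snake connecting homomorphism. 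I expect the bookkeeping, not any conceptual difficulty, to be where care is required.
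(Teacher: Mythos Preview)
Your concrete approach is essentially the paper's: the paper picks $w\in A^n$ with $v\cdot w=1$, takes the second hyperbolic vector to be $h^{-1}v^\vee$ (so your $w'$ is made explicit), builds a map $\gamma\colon Q(v)\to A^n$ from the resulting sections/retractions of $s$ and $s^\vee$, and then checks $\phi(v)=\gamma^\vee h\gamma$ together with the orthogonal decomposition $A^n = Ah^{-1}v^\vee \oplus Aw \oplus \gamma Q(v)$.

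Two small corrections. First, $w^\perp\cap(w')^\perp$ is projective of rank $n-2$, not \emph{free}; indeed the whole point of the construction is that $Q(v)$ is only stably free. Second, the ``main obstacle'' you flag is not one: since $v$ is unimodular, $v^\vee$ is a split monomorphism, hence $h^{-1}v^\vee(A^\vee)$ is automatically a rank-one free direct summand of $A^n$; no choice of $s$ is involved in that.

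Your alternative via sublagrangian reduction is a legitimate and slightly cleaner route that the paper does not take: with $L=h^{-1}v^\vee(A^\vee)$ one has $L^\perp=\ker v = P(v)$ and $L^\perp/L=\coker s = Q(v)$, so the standard splitting $(A^n,h)\simeq H(L)\bot (L^\perp/L,\bar h)$ gives the lemma once you verify that $\bar h=\phi(v)$, which is exactly the snake-lemma identification used to define $\phi(v)$. This packages the same computation into a known lemma; the paper's hands-on version has the advantage of producing the explicit embedding $\gamma$, which is convenient for the converse direction in Proposition~\ref{sympl}.
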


\begin{proof}
Consider the following diagram
$$\xymatrix{ & & & Q(v)^\vee\ar[d]^-{j^\vee} &  \\
0\ar[r] & A\ar[r]^-{v^\vee}\ar[d]_-s & (A^n)^\vee\ar[r]^-{i^\vee}\ar[d]_-{h^{-1}}\ar@{-->}[ru]^-{\gamma^\vee} & P(v)^\vee\ar[r]\ar[d]_-{-s^\vee} & 0\\
0\ar[r] & P(v)\ar[r]_-i\ar[d]_-j & A^n\ar[r]_-v & A\ar[r] & 0  \\
& Q(v)\ar@{-->}[ru]_-\gamma & &  &}$$
Choosing an element $w\in A^n$ such that $v\cdot w=1$ we get a section $\alpha$ of $v$ and therefore a section $r=i^\vee h\alpha$ of $-s^\vee$ and a retraction $-r^\vee$ of $s$. This gives a section $\beta$ of $j$ and we get an injective homomorphism $\gamma:Q(v)\to A^n$. One checks easily that $\phi(v)=\gamma^\vee h\gamma$ and that we have $A^n=Ah^{-1}v^\vee\oplus Aw\oplus \gamma Q(v)$. Moreover, $w$ and $h^{-1}v^\vee$ are both in $\gamma Q(v)^\bot$ and they generate a submodule isometric to $H(A)$.
\end{proof}

The following result is the analogue of Proposition \ref{free} for "decorated" modules:

\begin{prop}\label{sympl}
The pairs $(Q(v),\phi(v))$ and $(Q(v^\prime),\phi(v^\prime))$ are isometric if and only if there is a matrix $M\in Sp_n(A)$ such that $vM=v^\prime$.
\end{prop}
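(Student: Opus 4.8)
The plan is to mimic the proof of Proposition \ref{free}, keeping track of the extra structure given by the antisymmetric forms. One direction is essentially formal: if $M\in Sp_n(A)$ satisfies $vM=v'$, then $M$ induces an isomorphism $P(v')\to P(v)$ fitting into a morphism of the defining exact sequences (as in Proposition \ref{free}), and since $M$ is symplectic it is compatible with the hyperbolic form $h$ on $A^n$. One then checks that the induced isomorphism $P(v')\to P(v)$, together with the dual isomorphism on $P(v)^\vee\to P(v')^\vee$, is compatible with the maps $s$, $s^\vee$ appearing in the commutative diagram defining $(Q(v),\phi(v))$; applying the snake lemma functorially then yields an isometry $(Q(v'),\phi(v'))\simeq(Q(v),\phi(v))$. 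The point is simply that every arrow in the construction of $(Q(v),\phi(v))$ was built canonically out of $v$ and $h$, so a symplectic change of coordinates carrying $v$ to $v'$ transports the whole diagram.

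For the converse, suppose $\psi:(Q(v'),\phi(v'))\to(Q(v),\phi(v))$ is an isometry. Using the embeddings $\gamma:Q(v)\hookrightarrow A^n$ and $\gamma':Q(v')\hookrightarrow A^n$ from the proof of Lemma \ref{isometry}, together with the decompositions $A^n=Ah^{-1}v^\vee\oplus Aw\oplus\gamma Q(v)$ and $A^n=Ah^{-1}(v')^\vee\oplus Aw'\oplus\gamma' Q(v')$ (where $w,w'$ are chosen with $v\cdot w=1$, $v'\cdot w'=1$), I would define $M\in GL_n(A)$ by declaring it to send $h^{-1}(v')^\vee\mapsto h^{-1}v^\vee$, $w'\mapsto w$, and $\gamma'(q)\mapsto\gamma(\psi(q))$. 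Since in both decompositions the last summand is the orthogonal complement of the rank-two piece, and that rank-two piece is isometric to $H(A)$ with $h^{-1}v^\vee$, $w$ a hyperbolic pair (the same for the primed data), the map $M$ is an isometry of $(A^n,h)$, i.e. $M\in Sp_n(A)$. It remains to verify $vM=v'$: this follows because $v\circ\gamma=0$ (as $\gamma Q(v)=i(P(v))\subseteq\ker v$ — more precisely $\gamma$ factors through $i$), $v(h^{-1}v^\vee)=vh^{-1}v^\vee=0$ since $h$ is antisymmetric, and $v(w)=1=v'(w')$; so $v\circ M$ and $v'$ agree on all three summands of $A^n$.

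The main obstacle is bookkeeping in the converse direction: one must be careful that the hyperbolic pair $(h^{-1}v^\vee, w)$ really is a hyperbolic pair for $h$ and genuinely orthogonal to $\gamma Q(v)$ — this is exactly what the last sentence of the proof of Lemma \ref{isometry} asserts — and that the resulting $M$ is well-defined independently of the (non-canonical) choices of $w$ and the splittings; different choices differ by an element already in $Sp_n(A)$ fixing $v$, so this is harmless but should be checked. A secondary subtlety is the identification $\gamma Q(v)=i\,P(v)$ inside $A^n$ up to the isometry $j:P(v)\xrightarrow{\sim}Q(v)$ coming from $\phi(v)$; once that is pinned down, the relation $v\circ\gamma=0$ is immediate. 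One should also note that the statement as phrased is about isometry of the pairs $(Q(v),\phi(v))$, not merely isomorphism of the underlying modules, so the argument genuinely needs Lemma \ref{isometry}'s explicit decomposition rather than just Proposition \ref{free}.
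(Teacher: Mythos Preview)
Your proposal is correct and follows essentially the same route as the paper. The paper's proof is extremely terse --- for the converse it says only ``Lemma \ref{isometry} gives the converse statement'' --- and what you have written is precisely the unpacking of that sentence: you use the explicit decomposition $A^n=Ah^{-1}v^\vee\oplus Aw\oplus\gamma Q(v)$ from the proof of Lemma \ref{isometry} to build the symplectic matrix $M$ carrying $v'$ to $v$, which is exactly the composite of the two isometries $H(A)\bot(Q(v),\phi(v))\simeq H(A^{n/2})$ together with $\psi$ on the $Q$-summand.
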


\begin{proof}
Following the construction of $Q(v^\prime)$ for $v^\prime=vM$, we see that in that case $(Q(v^\prime),\phi(v^\prime))$ is isometric to $(Q(v),\phi(v))$. Lemma \ref{isometry} gives the converse statement.
\end{proof}

This shows that $Um_n(A)/Sp_n(A)$ classifies the stably free modules of rank $n-2$ decorated with the antisymmetric form obtained above.

\section{Cancellation of symplectic modules}\label{core}

Let $A$ be a ring of odd dimension $d\geq 3$. Let $(Q,\phi)$ be an antisymmetric pair, where $Q$ is of rank $d-1$. Suppose that there is an isometry 
$$f:H(A)\bot (Q,\phi)\simeq H(A^{(d+1)/2})$$
The projection of $A^2$ to the first factor gives a homomorphism $e_1^*:A^2\oplus Q\to A$. Then $e_1^*f^{-1}$ is a unimodular row of length $d+1$. If $g$ is another such isometry, then $gf^{-1}\in Sp_{d+1}(A)$ and $e_1^*g^{-1}(gf^{-1})=e_1^*f^{-1}$. Therefore, the class of $e_1^*f^{-1}$ in $Um_{d+1}(A)/Sp_{d+1}(A)$ is independent of the isometry $f$.

\begin{defin}
If $(Q,\phi)$ is a symmetric pair such that $Q$ is of rank $d-1$ and $H(A)\bot (Q,\phi)\simeq H(A^{(d+1)/2})$, we denote by $sp(Q,\phi)$ the class of $e_1^*f^{-1}$ in the set $Um_{d+1}(A)/Sp_{d+1}(A)$ for any isometry $f$. We call it the \emph{symplectic class} of $(Q,\phi)$.
\end{defin}

This definition seems rather artificial at first sight. However, the next proposition shows that it is natural.

\begin{prop}\label{identification}
Let $v:=sp(Q,\phi)$. Then $(Q,\phi)$ is isometric to $(Q(v),\phi(v))$.
\end{prop}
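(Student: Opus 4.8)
The plan is to unwind the two constructions and check they produce the same data. Starting from $(Q,\phi)$ with a chosen isometry $f\colon H(A)\bot(Q,\phi)\xrightarrow{\sim} H(A^{(d+1)/2})$, by definition $v=sp(Q,\phi)=e_1^*f^{-1}$, where $e_1^*\colon A^2\oplus Q\to A$ is the projection onto the first coordinate of $A^2$ (via the identification $H(A)=A\oplus A^\vee$). The key observation is that the exact sequence defining $P(v)$ can be read off directly from this description: composing $f^{-1}$ with $e_1^*$ exhibits $A^{d+1}=H(A^{(d+1)/2})$ (as a module) mapping onto $A$, and the kernel is $f^{-1}$ of $\ker(e_1^*)=A^1\oplus Q$ (the second factor of $A^2$ together with $Q$). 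So $P(v)\simeq A\oplus Q$ as modules, and one gets a concrete isometry-level description of all the maps in the snake-lemma diagram of Section \ref{unimodular} in terms of $f$.

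Concretely, I would first fix the identification $H(A^{(d+1)/2})$ with $A^{d+1}$ equipped with its standard antisymmetric form $h$, so that $f$ identifies $h$ restricted appropriately with the form $H(A)\bot\phi$. Then I would trace through the construction of $(Q(v),\phi(v))$: the section $s$, the module $\coker s\simeq\ker s^\vee$, and the embedding $\gamma\colon Q(v)\to A^n$ from Lemma \ref{isometry}. Using $f$ as a change of coordinates, the antisymmetric form $h$ pulls back to $H(A)\bot\phi$, the summand $Ah^{-1}v^\vee\oplus Aw$ of Lemma \ref{isometry} corresponds (under $f$) to the $H(A)$-summand of $H(A)\bot(Q,\phi)$, and the orthogonal complement $\gamma Q(v)^\bot{}^\bot=\gamma Q(v)$ corresponds to the $Q$-summand. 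The formula $\phi(v)=\gamma^\vee h\gamma$ from the proof of Lemma \ref{isometry} then shows that the induced form on $Q(v)$ is carried by $f$ to $\phi$ itself, which is exactly the asserted isometry $(Q,\phi)\simeq(Q(v),\phi(v))$.

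The main obstacle will be bookkeeping: the construction of $(Q(v),\phi(v))$ passes through a not-entirely-canonical snake-lemma isomorphism and depends (a priori) on the choice of $w$ with $v\cdot w=1$, while $sp(Q,\phi)$ depends on the choice of $f$; one must verify that the two choices can be made compatibly — i.e. that $w=f(e_1,0)$ (or a similar explicit element) does the job — so that the splittings $A^n=Ah^{-1}v^\vee\oplus Aw\oplus\gamma Q(v)$ on the one side and $A^{d+1}=f^{-1}(\text{first }A)\oplus f^{-1}(\text{second }A)\oplus f^{-1}(Q)$ on the other literally coincide. Once the two decompositions are matched, the identification of the forms is forced, since both are just $h$ restricted to the $Q$-summand. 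An alternative, cleaner route that avoids some of this: use Proposition \ref{sympl} — it suffices to produce \emph{some} isometry $(Q,\phi)\bot H(A)\simeq H(A^{(d+1)/2})$ carrying $e_1^*$ to (a matrix acting on) $v$, and $f$ itself is such an isometry essentially by the definition of $v$; then Lemma \ref{isometry} applied to $v=e_1^*f^{-1}$ gives back $(Q(v),\phi(v))\bot H(A)\simeq H(A^{(d+1)/2})$, and comparing the two via Proposition \ref{sympl} yields the result. I expect the second route to be the one to write up, modulo checking that the $Sp$-orbit ambiguity in Proposition \ref{sympl} does not obstruct concluding isometry of the rank-$(d-1)$ pairs themselves.
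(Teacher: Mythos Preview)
Your first route is correct and is exactly what the paper does, though the paper compresses it to three lines: it writes down the commutative square with $f$ on the middle vertical, $e_1^*$ on top and $v=e_1^*f^{-1}$ on the bottom, notes that $f$ restricts to an isomorphism $f'\colon Q\oplus A\to P(v)$ on kernels, and then asserts that ``the construction of $(Q(v),\phi(v))$ shows that $f'$ induces an isometry $(Q,\phi)\simeq(Q(v),\phi(v))$.'' The reason this works without the compatibility-of-$w$ bookkeeping you worry about is that $f$ is an isometry, hence intertwines the entire snake-lemma diagram for $v$ (built from $h$ on $A^{d+1}$) with the same diagram for $e_1^*$ (built from $H(A)\bot\phi$ on $A^2\oplus Q$); and for $e_1^*$ that diagram visibly outputs $(Q,\phi)$. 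No particular choice of $w$ enters.

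Your proposed second route via Proposition~\ref{sympl}, however, has a genuine gap. That proposition compares two pairs each already of the form $(Q(\cdot),\phi(\cdot))$; it does not let you cancel $H(A)$ from an isometry $(Q,\phi)\bot H(A)\simeq(Q(v),\phi(v))\bot H(A)$ when the left-hand summand is an arbitrary symplectic pair. Symplectic cancellation in this rank is precisely what can fail here, and its failure is exactly what $Um_{d+1}(A)/Sp_{d+1}(A)$ measures --- so invoking it would beg the question the whole section is set up to analyse. To salvage the second route you would first have to identify $(Q,\phi)$ with $(Q(e_1^*),\phi(e_1^*))$ computed relative to the form $H(A)\bot\phi$, but that identification is nothing other than the first route. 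Write up the first route.
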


\begin{proof}
Let $f:H(A)\bot (Q,\phi)\simeq H(A^{(d+1)/2})$ be the isometry defining $sp(Q,\phi)$.
Consider the commutative diagram
$$\xymatrix{0\ar[r] & Q\oplus A\ar[r]\ar@{-->}[d]_-{f^\prime} & Q\oplus A^2\ar[r]^-{e_1^*}\ar[d]_-f & A\ar[r]\ar@{=}[d] & 0\\
0\ar[r] & P(v)\ar[r] & A^n\ar[r]_-{v} & A\ar[r] & 0}$$
The construction of $(Q(v),\phi(v))$ shows that $f^\prime:Q\oplus A\to P(v)$ induces an isometry $(Q,\phi)\simeq (Q(v),\phi(v))$. 
\end{proof}

So we have a procedure to understand when an antisymmetric pair $(Q,\phi)$ is isometric to $H(A^{(d-1)/2})$. The first obstruction is its class in $\K A$. If this is different from $0$, then it cannot be isometric to $H(A^{(d-1)/2})$. If the class is zero, then (using \cite[Chapter IV, Corollary 4.15]{Bass2}) we get an isometry  
$$(Q,\phi)\bot H(A)\simeq (A^{(d+1)/2}).$$
and therefore a symplectic class in $Um_{d+1}(A)/Sp_{d+1}(A)$. Using Propositions \ref{identification} and \ref{sympl}, we see that this class must be trivial in order for $(Q,\phi)$ to be isometric to $H(A^{(d-1)/2})$. 

In dimension $3$, this procedure gives a complete understanding of projective modules of rank $2$.
\begin{thm}\label{cancellation}
Let $A$ be a ring of dimension $3$. Then every projective module of rank $2$ with trivial determinant is free if and only if the group $\K A$ and the set $Um_4(A)/\Sp A$ are both trivial.
\end{thm}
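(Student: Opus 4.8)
The plan is to prove the two implications separately, using the dictionary between projective modules of rank $2$ and antisymmetric pairs set up in the previous sections. First suppose that $\K A=0$ and $Um_4(A)/\Sp A$ is trivial, and let $P$ be a projective module of rank $2$ with $\det P\simeq A$. Equip $P$ with its canonical antisymmetric form $\chi:P\to\homm AP{\det P}\simeq\homm APA$, so that $(P,\chi)$ is an antisymmetric pair of rank $2$ representing $e(P)\in\K A$. Since $\K A=0$, the class $e(P)$ vanishes, and by \cite[Chapter IV, Corollary 4.15]{Bass2} (the same cancellation input used after Proposition \ref{identification}, applied with $d=3$) we obtain an isometry $(P,\chi)\bot H(A)\simeq H(A^2)$. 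Thus $(P,\chi)$ has a well-defined symplectic class $sp(P,\chi)\in Um_4(A)/\Sp A$; by hypothesis this set has a single element, so $sp(P,\chi)$ is the orbit of $(1,0,0,0)$. By Proposition \ref{identification}, $(P,\chi)$ is isometric to $(Q(v),\phi(v))$ for $v=sp(P,\chi)$, and since $v$ lies in the $\Sp A$-orbit of $(1,0,0,0)$, Proposition \ref{sympl} gives an isometry $(Q(v),\phi(v))\simeq(Q(v_0),\phi(v_0))$ where $v_0=(1,0,0,0)$. A direct inspection of the construction shows $Q(v_0)\simeq A^2$ (indeed $P(v_0)\simeq A^3$ splits off the first coordinate, and the snake-lemma quotient is $H(A)$ up to the trivial summand), hence $P\simeq Q(v)\simeq A^2$ is free.

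For the converse, assume every rank-$2$ projective module with trivial determinant is free. The implication $Um_4(A)/\Sp A=0$ is the easier half: given $v\in Um_4(A)$, Lemma \ref{isometry} shows $(Q(v),\phi(v))\bot H(A)\simeq H(A^2)$, so in particular $Q(v)$ is stably free of rank $2$, hence free of rank $2$ with trivial determinant by hypothesis; then $P(v)\simeq Q(v)\oplus A\simeq A^3$, and by Proposition \ref{free} the row $v$ lies in the $GL_4(A)$-orbit of $(1,0,0,0)$. One still has to upgrade this from a $GL_4$-equivalence to an $\Sp A$-equivalence, and for that I would use Proposition \ref{sympl} together with the fact that once $Q(v)$ is free, the antisymmetric pair $(Q(v),\phi(v))$ is itself isometric to $H(A)$ (its class in $\K A$ being automatically zero since $\K A$ is the obstruction, and over a free rank-$2$ module with trivial form-class cancellation of the hyperbolic plane applies), whence $sp$ of it is the orbit of $(1,0,0,0)$ and Proposition \ref{identification}/\ref{sympl} identify the orbit of $v$ with that of $(1,0,0,0)$ in $Um_4(A)/\Sp A$.

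The genuinely delicate point, and the one I expect to be the main obstacle, is showing $\K A=0$. Here one must go in the reverse direction: start from an arbitrary antisymmetric pair $(Q,\phi)$ of \emph{small rank} representing a class in $\K A$ and argue that it is isometric to a hyperbolic form. A class in $\K A=GW^2(A)/\langle H(A)\rangle$ is represented by some $(Q,\phi)$ with $Q$ projective; by adding copies of $H(A)$ and using Bass-type stability (Krull dimension $3$) one reduces to $Q$ of rank $2$. Then $\phi$ forces $\det Q\simeq A$: an antisymmetric isomorphism $Q\to\homm AQA$ on a rank-$2$ module identifies $\det Q$ with $(\det Q)^{-1}$ and in fact trivializes it (this is where the rank-$2$ identity $\chi$ of \S\ref{symplectic}.2 is the universal example). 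So $Q$ is a rank-$2$ projective module with trivial determinant, hence free by hypothesis, hence $Q\simeq A^2$ and $(Q,\phi)\simeq H(A)$ — so its class in $\K A$ is zero. Making the reduction to rank $2$ and the determinant-triviality argument fully rigorous (in particular handling the subtlety that $\phi$ takes values in $\homm AQA$ rather than $\homm AQ{\det Q}$, and that every antisymmetric form on a rank-$2$ module is a unit multiple of $\chi$) is the crux; everything else is a bookkeeping exercise in the exact sequences already established.
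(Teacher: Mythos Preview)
Your argument is correct and follows essentially the same route as the paper's proof: reduce every class in $\K A$ to the Euler class of a rank-$2$ module via Bass's symplectic cancellation (the paper cites \cite[Chapter IV, Theorem 4.12]{Bass2} for this, which resolves your worry about the reduction to rank $2$), use that every nondegenerate antisymmetric form on $A^2$ is isometric to $h$, and for $Um_4(A)/\Sp A$ invoke Proposition \ref{sympl} directly. Your detour through $GL_4(A)$ and Proposition \ref{free} in the converse is unnecessary---once $Q(v)\simeq A^2$ you have $(Q(v),\phi(v))\simeq H(A)$ immediately (this is the ``only form on $A^2$'' fact again, not a consequence of $\K A=0$), and Proposition \ref{sympl} then places $v$ in the $\Sp A$-orbit of $(1,0,0,0)$ without ever passing through $GL_4$.
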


\begin{proof}
Remark first that every element in $\K A$ is of the form $e(P)$ for some projective module $P$ of rank $2$ (use \cite[Chapter IV, Theorem 4.12]{Bass2}). If every such module is free, then $\K A=0$ because the only non degenerate anti-symmetric form on $A^2$ is $h$ (up to isometry). The set $Um_4(A)/\Sp A$ is reduced to a point because of Proposition \ref{sympl}.

Suppose now that $\K A=0$. Then the Euler class of $P$ is zero for any $P$ of rank $2$, and its symplectic class (which is then defined) is in the $Sp_4(A)$-orbit of $(1,0,0,0)$. Therefore any such $P$ is free by Proposition \ref{sympl} again.
\end{proof}

In the last section, we show that the real algebraic sphere of dimension $3$ satisfy the hypothesis of the theorem. We first need some preparation in order to compute the group $\K {S^3}$. This is the object of the next section.

%%%%%%%%%%%%%%%%%%%%%%%%%%%%%%%%%%%%%%%%%%%%%%%%%%%%%%%%%%%%%%%%%%%%%%%%%%%%%%%%%%%%%%%%%%%%%%%%%%%%%%%%%%%%%%%%%

\section{The Gersten-Witt complex}\label{gwss}
\subsection{The complex}
Let $X$ be an integral regular scheme over a field $k$. Let $d=\dim X$. The \emph{Gersten-Witt complex} is the complex
$$\xymatrix@C=1.3em{0\ar[r] & W(\kappa(X))\ar[r] & \displaystyle{\bigoplus_{x_1\in X^{(1)}} W(\kappa(x_1))   }\ar[r] & \ldots  \ar[r] &  \displaystyle{\bigoplus_{x_d\in X^{(d)}} W(\kappa(x_d))   }\ar[r] & 0}$$
as constructed in \cite{BW}. Observe that the Witt groups in the complex are in fact twisted Witt groups, i.e. Witt groups with twisted duality $\homm {\kappa(x_p)}{\_}{\omega_{x_p}}$, where $\omega_{x_p}=\exten p{\O_{X,x_p}}{\kappa(x_p)}{\O_{X,x_p}}$ (which is a $\kappa(x_p)$-vector space of dimension $1$ because $X$ is regular). We denote by $H^i(X,W)$ the cohomology groups of this complex. If $d\leq 3$, we have that $H^i(X,W)=W^i(X)$ for $0\leq i\leq 3$ (\cite{BW} again). 

Even if the Witt groups in the complex are twisted by line bundles, we can still define the fundamental ideal and the groups behaving like the powers of it (just choose any isomorphism between the twisted Witt group and the classical Witt group, see for instance \cite[Definition 9.2.1]{Fa1}). It turns out that the differentials of the Gersten-Witt complex respect the powers of the fundamental ideal (\cite[Theorem 9.2.4]{Fa1}, or \cite{Gi2}), and therefore we get a complex
$$\xymatrix@C=1.3em{0\ar[r] & I^j(\kappa(X))\ar[r] & \displaystyle{\bigoplus_{x_1\in X^{(1)}} I^{j-1}(\kappa(x_1))   }\ar[r] & \ldots  \ar[r] &  \displaystyle{\bigoplus_{x_d\in X^{(d)}} I^{j-d}(\kappa(x_d))   }\ar[r] & 0}$$
for any $j$ (where $I^l(\kappa(x_p))$ is defined to be $W(\kappa(x_p))$ if $l\leq 0$). We denote by $H^i(X,I^j)$ the cohomology groups of this complex. We get $H^i(X,W)=H^i(X,I^j)$ when $j\leq 0$ by definition.

Let $\overline I^j(\kappa(x_p))=I^j(\kappa(x_p))/I^{j+1}(\kappa(x_p))$. Since the Gersten-Witt complex respects the powers of the fundamental ideal, we get a complex
$$\xymatrix@C=1.3em{0\ar[r] &\overline I^j(\kappa(X))\ar[r] & \displaystyle{\bigoplus_{x_1\in X^{(1)}}\overline I^{j-1}(\kappa(x_1))   }\ar[r] & \ldots  \ar[r] &  \displaystyle{\bigoplus_{x_d\in X^{(d)}}\overline I^{j-d}(\kappa(x_d))   }\ar[r] & 0}$$
which coincides with the Gersten complex in \'etale cohomology with coefficient in $\mu_2$ by Voevodsky's results (\cite{OVV} and \cite{Vo}). For any $i,j\in\Z$, let $H^i(X,\overline I^j)$ denote the cohomology groups of this complex. Observe that $H^i(X,\overline I^j)=0$ if $i>j$ and that $H^i(X,\overline I^i)=CH^i(X)/2$ (\cite[Theorem 9.1]{Gi2}). By construction, we have for any $j$ a long exact sequence in cohomology
$$\xymatrix@=1.3em{0\ar[r] & H^0(X,I^{j+1})\ar[r] & H^0(X,I^j)\ar[r] & H^0(X,\overline I^j)\ar[r] & H^1(X,I^{j+1})\ar[r] & \ldots }$$
This sequence (and the fact that $H^i(X,\overline I^j)=0$ if $i>j$) yields the following Lemma whose proof is obvious:

\begin{lem}\label{comparison}
Let $X$ be a regular scheme over a field $k$. Then for any $j\in\Z$ we have $H^i(X,W)=H^i(X,I^j)$ if $i\geq j+1$. Moreover, the natural homomorphism $H^j(X,I^j)\to H^j(X,W)$ is surjective.
\end{lem}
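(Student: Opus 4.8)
The plan is to read everything off the long exact sequence
$$\cdots \to H^{i-1}(X,\overline I^j) \to H^i(X,I^{j+1}) \to H^i(X,I^j) \to H^i(X,\overline I^j) \to H^{i+1}(X,I^{j+1}) \to \cdots$$
displayed just above the statement, combined with the vanishing $H^i(X,\overline I^j)=0$ for $i>j$. The first thing to record is the elementary consequence that, for a fixed $j$, the arrow $H^i(X,I^{j+1})\to H^i(X,I^j)$ occurring in that sequence is surjective as soon as $H^i(X,\overline I^j)=0$, i.e. as soon as $i\geq j+1$, and is an isomorphism as soon as both $H^{i-1}(X,\overline I^j)$ and $H^i(X,\overline I^j)$ vanish, i.e. as soon as $i\geq j+2$.

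For the first assertion, fix $i\geq j+1$. If $j\leq 0$ there is nothing to prove, since $H^i(X,W)=H^i(X,I^j)$ by the very definition of $H^i(X,I^j)$ for non-positive $j$. If $j\geq 1$, I would apply the observation above successively with the indices $j-1,j-2,\ldots,0$ in place of $j$: since $i\geq j+1\geq \ell+2$ for every $\ell\leq j-1$, each map in the chain
$$H^i(X,I^j)\to H^i(X,I^{j-1})\to\cdots\to H^i(X,I^0)=H^i(X,W)$$
is an isomorphism, and composing them gives $H^i(X,I^j)=H^i(X,W)$.

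For the surjectivity of $H^j(X,I^j)\to H^j(X,W)$ I would run the same chain in the single cohomological degree $i=j$ (the cases $j\leq 0$ being trivial, as the map is then the identity or both groups vanish). For $j\geq 1$ the first arrow $H^j(X,I^j)\to H^j(X,I^{j-1})$ is the borderline case $i=j=(j-1)+1$, hence only surjective, whereas every subsequent arrow $H^j(X,I^{\ell+1})\to H^j(X,I^\ell)$ with $\ell\leq j-2$ satisfies $i=j\geq\ell+2$ and is thus an isomorphism; the composite $H^j(X,I^j)\to H^j(X,I^0)=H^j(X,W)$ is therefore surjective. There is no genuine obstacle here — the whole argument is diagram chasing in the long exact sequence — the only point requiring care being the bookkeeping of the index ranges, namely for which $i$ the connecting maps are isomorphisms and for which they are merely surjective, together with the fact that $H^i(X,I^j)$ is defined to equal $H^i(X,W)$ for $j\leq 0$, which is what anchors the induction.
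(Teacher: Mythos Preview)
Your argument is correct and is exactly the diagram chase the paper intends when it declares the proof ``obvious'': you use the long exact sequence together with the vanishing $H^i(X,\overline I^j)=0$ for $i>j$ to see that each map $H^i(X,I^{\ell+1})\to H^i(X,I^\ell)$ is an isomorphism for $i\geq \ell+2$ and surjective for $i=\ell+1$, and then compose down to $H^i(X,I^0)=H^i(X,W)$. There is nothing to add.
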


In case $j=1$, one can say a bit more:

\begin{lem}\label{deg_1}
Let $X$ be a regular scheme over a field $k$. Then $H^1(X,I)=H^1(X,W)$.
\end{lem}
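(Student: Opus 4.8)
The plan is to exploit the long exact sequence
$$\ldots \to H^0(X,\overline I^{\,1}) \to H^1(X,I^2) \to H^1(X,I) \to H^1(X,\overline I^{\,1}) \to H^2(X,I^2) \to \ldots$$
together with the already-recorded vanishing $H^i(X,\overline I^{\,j})=0$ for $i>j$. By Lemma \ref{comparison} (applied with $j=1$) we already know the surjectivity of $H^1(X,I)\to H^1(X,W)$, so the only thing left is injectivity, equivalently the vanishing of the kernel of $H^1(X,I)\to H^1(X,W)=H^1(X,I^0)$. Iterating the defining exact sequences, this kernel is a successive extension of the images of the maps $H^1(X,I^{j+1})\to H^1(X,I^j)$ for $j=1$; so it suffices to show $H^1(X,I^2)\to H^1(X,I)$ is the zero map, which in turn follows once I show $H^1(X,I^2)=0$.

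To see $H^1(X,I^2)=0$, I would again climb the tower: from the long exact sequence relating $I^{j+1}$, $I^j$ and $\overline I^{\,j}$ we get, for each $j\geq 2$, an exact piece
$$H^0(X,\overline I^{\,j}) \to H^1(X,I^{j+1}) \to H^1(X,I^j) \to H^1(X,\overline I^{\,j}).$$
For $j\geq 2$ the term $H^1(X,\overline I^{\,j})$ need not vanish, so this alone is not enough; instead I would use that for $p$ large (indeed $p>\dim X$) the group $I^{p}$ of any field in the complex is $W$ in degrees below $p$ but the relevant cohomology stabilizes — more precisely, the Gersten–Witt complex for $I^j$ agrees with the one for $I^{j+1}$ in the first few spots once $j$ exceeds the cohomological degree $i$ we care about. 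Concretely, in the complex computing $H^1(X,I^j)$ the first two terms are $I^j(\kappa(X))$ and $\bigoplus_{x_1} I^{j-1}(\kappa(x_1))$, and $H^1$ is a subquotient of the second term; for $j\geq 2$ this second term is $\bigoplus I^{j-1}(\kappa(x_1))$ with $j-1\geq 1$. So I need the colimit-type statement that $H^1(X,I^j)\xrightarrow{\sim} H^1(X,I^{j-1})$ eventually, combined with the easier observation that $H^1(X,I^j)=H^1(X,W)$ for $j\leq 1$ is already known, and I want to run the comparison the other direction.

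The cleanest route: show directly that $H^1(X,I^2)\to H^1(X,I^1)$ is zero by identifying the connecting map. From the exact sequence $0\to I^{2}\to I^{1}\to \overline I^{\,1}\to 0$ of Gersten complexes we get $H^0(X,I^1)\to H^0(X,\overline I^{\,1}) \to H^1(X,I^2)\to H^1(X,I^1)\to H^1(X,\overline I^{\,1})$. Now $\overline I^{\,1}$ is the mod-$2$ Milnor/étale complex, and $H^0(X,\overline I^{\,1})$ is (a subgroup of) $H^0_{et}(X,\mu_2)$, i.e. continuous functions to $\Z/2$ on the connected components — for $X$ integral this is just $\Z/2$, hit by the rank-mod-$2$ (discriminant-type) map from $H^0(X,I^1)=$ even-rank forms over $\kappa(X)$... one checks this $H^0$-map is surjective, so the connecting map out of $H^0(X,\overline I^{\,1})$ is zero, hence $H^1(X,I^2)\hookrightarrow H^1(X,I^1)$; and separately $H^1(X,I^2)=0$ by repeating with $j=2,3,\dots$ up to $\dim X$ where it terminates. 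So ultimately $H^1(X,I^1)\hookrightarrow H^1(X,\overline I^{\,1})$ and also $H^1(X,I^1)\twoheadrightarrow H^1(X,W)$, and chasing $H^1(X,I^2)=0$ forces these to coincide.

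\textbf{Main obstacle.} The delicate point is the bookkeeping in the tower of exact sequences: one must verify that the error terms $H^1(X,\overline I^{\,j})$ and $H^0(X,\overline I^{\,j})$ either vanish (for $j=0$, trivially; for $\overline I^{\,1}$ by the surjectivity at the $H^0$ level) or get absorbed, so that the composite $\varinjlim$-type comparison actually gives $H^1(X,I^2)=0$ and not merely a finite filtration with nonzero graded pieces. I expect the proof the author gives is short precisely because $j=1$ is small enough that only the single sequence for $0\to I^2\to I\to \overline I^{\,1}\to 0$ is needed, the key input being that $H^0(X,I)\to H^0(X,\overline I^{\,1})$ is onto (an easy fact about forms of even rank over a field, or over the function field of $X$), killing the connecting homomorphism into $H^1(X,I^2)$, while on the other side $H^1(X,\overline I^{\,2})$ sits in degree $1<2$ and so controls nothing new — wait, that last is false; so the genuinely needed fact is $H^1(X,I^2)=0$, for which I would cite or reprove the analogous statement one dimension up, ultimately bottoming out at $\dim X$.
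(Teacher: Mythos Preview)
Your argument goes off the rails at the very first step: to study the map $H^1(X,I)\to H^1(X,W)$ you must use the short exact sequence of complexes $0\to I\to W\to W/I\to 0$, i.e.\ the sequence for $j=0$, not the sequence $0\to I^2\to I\to \overline I^{\,1}\to 0$ for $j=1$. The latter controls the map $H^1(X,I^2)\to H^1(X,I)$, which is a different map entirely; knowing that $H^1(X,I^2)=0$ (or that $H^1(X,I^2)\to H^1(X,I)$ vanishes) tells you that $H^1(X,I)$ embeds in $H^1(X,\overline I^{\,1})$, but says nothing about injectivity of $H^1(X,I)\to H^1(X,W)$. Relatedly, your description of $H^0(X,\overline I^{\,1})$ as ``continuous functions to $\Z/2$'' hit by the rank-mod-$2$ map is a description of $H^0(X,\overline I^{\,0})=H^0(X,W/I)$, not of $H^0(X,\overline I^{\,1})$; the group $\overline I^{\,1}(\kappa(X))=I/I^2$ is the group of square classes $\kappa(X)^*/\kappa(X)^{*2}$, which is far from $\Z/2$ in general. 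Finally, the assertion $H^1(X,I^2)=0$ that you try to bootstrap towards is simply false for most $X$, and no amount of climbing the $I^j$-tower will produce it.

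The paper's proof is exactly the one-line argument you were groping for but with the correct sequence: from $0\to I\to W\to \overline W\to 0$ (where $\overline W=W/I=\overline I^{\,0}$) one gets
\[
H^0(X,W)\longrightarrow H^0(X,\overline W)\longrightarrow H^1(X,I)\longrightarrow H^1(X,W)\longrightarrow 0,
\]
the last zero because $H^1(X,\overline I^{\,0})=0$ (this is the $i>j$ vanishing with $i=1$, $j=0$). It then remains to show $H^0(X,W)\to H^0(X,\overline W)$ is onto; reducing to the connected case, $H^0(X,\overline W)=\Z/2$ is hit by pulling back $\langle 1\rangle$ along the structure map $X\to\spec k$. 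That is the whole proof; there is no need to touch $I^2$ at all.
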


\begin{proof}
Using the long exact sequence in cohomology 
$$\xymatrix{H^0(X,W)\ar[r] & H^0(X,\overline W)\ar[r] & H^1(X,I)\ar[r] & H^1(X,W)\ar[r] & 0,}$$
we see that it suffices to prove that the homomorphism $H^0(X,W)\to H^0(X,\overline W)$ is surjective to conclude. Since the Gersten-Witt complex of a scheme is the direct sum of the Gersten-Witt complexes of its connected components, we can suppose that $X$ is connected. Let $p:X\to k$ be the projection. Then we have a commutative diagram
$$\xymatrix{H^0(k,W)\ar[r]\ar[d]_-{p^*} & H^0(k,\overline W)\ar@{=}[d]\\
H^0(X,W)\ar[r] & H^0(X,\overline W)}$$
and the above horizontal homomorphism is clearly surjective.
\end{proof}

\subsection{Supports}
As in the previous section, let $X$ be an integral scheme of dimension $d$ over a field $k$. If $Y\subset X$ is a closed subscheme, one can consider the \emph{Gersten-Witt complex with support on $Y$}:
$$\xymatrix@C=0.9em{0\ar[r] & \displaystyle{\bigoplus_{x_0\in X^{(0)}\cap Y} W(\kappa(x_0))   }\ar[r] & \displaystyle{\bigoplus_{x_1\in X^{(1)}\cap Y} W(\kappa(x_1))   }\ar[r] & \ldots  \ar[r] &  \displaystyle{\bigoplus_{x_d\in X^{(d)}\cap Y} W(\kappa(x_d))   }\ar[r] & 0}$$
and its filtered version
$$\xymatrix@C=1.3em{0\ar[r] & \displaystyle{\bigoplus_{x_0\in X^{(0)}\cap Y} I^j(\kappa(x_0))   }\ar[r] & \ldots  \ar[r] &  \displaystyle{\bigoplus_{x_d\in X^{(d)}\cap Y} I^{j-d}(\kappa(x_d))   }\ar[r] & 0.}$$
We denote by $H^i_Y(X,I^j)$ the cohomology groups of this complex. In case $Y$ is the zero locus of a global section $f\in \O_X(X)$, we simply denote by $H^i_f(X,I^j)$ the group $H^i_Y(X,I^j)$. All the results of Section \ref{gwss} remains true for groups with support. In addition, there is a long exact sequence
$$\xymatrix@C=1.4em{\ldots\ar[r] & H^i_Y(X,I^j)\ar[r] & H^i(X,I^j)\ar[r] & H^i(U,I^j)\ar[r] & H^{i+1}_Y(X,I^j)\ar[r] & \ldots}$$
for any $j\in\Z$, where $U=X-Y$. If $Y\to X$ is smooth, one can compare the cohomology groups with support on $Y$ with the cohomology groups of $Y$ (\cite[Remark 9.3.5]{Fa1}). We will need only the following particular result (\cite[Theorem 4.1]{Gi1} and \cite[Remark 9.3.5]{Fa1}) :

\begin{lem}\label{reg_param}
Let $X=\spec A$ be a regular affine scheme over a field $k$. Let $f\in A$ be a regular element such that $Y=\spec {A/f}$ is regular. Then 
$$H^i_f(X,I^j)\simeq H^{i-1}(Y,I^{j-1})$$
for any $i,j\in\Z$.
\end{lem}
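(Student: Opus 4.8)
The plan is to produce a degree-shifting isomorphism of filtered complexes between the Gersten--Witt complex of $X$ with support on $Y$ and the Gersten--Witt complex of $Y$, and then read off the statement on cohomology. Write $C^\bullet_Y(X,I^j)$ for the filtered support complex of $X$ and $C^\bullet(Y,I^{j-1})$ for the filtered Gersten--Witt complex of $Y$. (One could alternatively try to combine a localization sequence $\ldots\to W^{i-1}(U)\to W^i_Y(X)\to W^i(X)\to\ldots$ with a dévissage isomorphism $W^i_Y(X)\simeq W^{i-1}(Y)$ for Balmer's derived Witt groups, but this only handles the case $j\leq 0$ and does not see the $I^j$-filtration, so a statement at the level of complexes is really needed.)

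First I would match the two complexes term by term. Since $X$ is integral and $f$ is a non-zero-divisor, $f\neq 0$, so the generic point does not lie in $Y$ and $C^\bullet_Y(X,I^j)$ starts in degree $1$. For a point $x\in X$ lying in $Y$, the local ring $\O_{X,x}$ is regular, hence Cohen--Macaulay, and $f$ stays a non-zero-divisor in it; therefore $\O_{Y,x}=\O_{X,x}/f$ has dimension exactly $\dim\O_{X,x}-1$, while $\kappa(x)$ is the same computed in $X$ or in $Y$. Hence $x\mapsto x$ is a bijection $X^{(p)}\cap Y\xrightarrow{\ \sim\ }Y^{(p-1)}$, and the degree-$i$ term $\bigoplus_{x\in X^{(i)}\cap Y}I^{j-i}(\kappa(x))$ of $C^\bullet_Y(X,I^j)$ is literally the degree-$(i-1)$ term of $C^\bullet(Y,I^{j-1})$, once one knows the duality twists agree.

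For the twists, let $x\in X^{(p)}\cap Y$ correspond to $y\in Y^{(p-1)}$. The complex $0\to\O_{X,x}\xrightarrow{f}\O_{X,x}\to\O_{Y,x}\to 0$ is a free resolution of $\O_{Y,x}$ over $\O_{X,x}$ of length one, and the associated change-of-rings isomorphism gives $\exten{p-1}{\O_{Y,x}}{\kappa(x)}{\O_{Y,x}}\simeq\exten{p}{\O_{X,x}}{\kappa(x)}{\O_{X,x}}$, i.e. $\omega_y\simeq\omega_x$, depending only on $f$. Since the fundamental filtration of a twisted Witt group is stable under multiplication by one-dimensional forms, this isomorphism is compatible with the $I^j$-filtration, so it carries $I^{j-p}(\kappa(x),\omega_x)$ onto $I^{(j-1)-(p-1)}(\kappa(y),\omega_y)$. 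Combined with the previous paragraph, this produces term-wise isomorphisms $C^i_Y(X,I^j)\simeq C^{i-1}(Y,I^{j-1})$.

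The substantive point is that these term-wise isomorphisms commute with the differentials: one must check that the second residue homomorphism of the Gersten--Witt complex of $X$ between two points $x<x'$ both lying in $Y$ agrees, under the identifications above, with the second residue homomorphism of the Gersten--Witt complex of $Y$ between the corresponding points $y<y'$. This is exactly the compatibility of the coniveau construction of the complex in \cite{BW} with the dévissage isomorphism attached to the regular closed immersion $Y\hookrightarrow X$; it is not formal, and I would quote it from \cite[Theorem 4.1]{Gi1}, with the $I^j$-filtered refinement recorded in \cite[Remark 9.3.5]{Fa1}. Granting it, we obtain an isomorphism of complexes $C^\bullet_Y(X,I^j)\simeq C^{\bullet-1}(Y,I^{j-1})$, and passing to cohomology yields $H^i_f(X,I^j)\simeq H^{i-1}(Y,I^{j-1})$ for all $i,j\in\Z$. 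The main obstacle is precisely this last step --- identifying the differentials across the dévissage --- which requires unwinding the construction of the Gersten--Witt complex rather than a formal argument.
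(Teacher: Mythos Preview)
Your proposal is correct and in fact more detailed than what the paper provides: the paper does not prove this lemma at all but simply records it with the references \cite[Theorem 4.1]{Gi1} and \cite[Remark 9.3.5]{Fa1}, which are exactly the sources you invoke for the key step of identifying the differentials across d\'evissage. Your sketch of the term-by-term identification (codimension shift, matching of twists via the Koszul resolution of $\O_{Y,x}$) is the standard unwinding of what those references contain, so there is nothing to compare---you have supplied the argument the paper only cites.
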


%%%%%%%%%%%%%%%%%%%%%%%%%%%%%%%%%%%%%%%%%%%%%%%%%%%%%%%%%%%%%%%%%%%%%%%%%%%%%%%%%%%%%%%%%%%%%%%%%%%%%%%%%%%%%%%%%

\section{The real sphere of dimension $3$}\label{witt}

Our goal of the section is to compute the group $\K {S^3}$, which coincide with the Witt group $W^2(S^3)$ (see Corollary \ref{s3}). As $S^3$ is of dimension $3$, $W^2(S^3)=H^2(S^3,W)$ which is really the group that we are going to compute. 

\subsection{Auxiliary computations}

For any $n\in\N$, let $S^n=\R[x_0,\ldots,x_n]/(\sum x_i^2-1)$. Using the long exact sequence of localization, we see that it suffices to compute the Witt groups of $S^{n-1}$ and $S^n_{x_n}$ to compute the Witt groups of $S^n$. The ring $S^n_{x_n}$ has a nice expression:

 If $B^n=\R[x_0,\ldots,x_n]/(\sum_{i=0}^{n-2} x_i^2-x_{n-1}x_n+1)$ (for $n\geq 1$), there is an isomorphism
$$\psi:S^n_{x_n}\to B^n_{x_{n-1}+x_n}$$
defined by 
$$\psi(x_i)=\left\{ \begin{array}{ll} \frac {2x_i}{(x_{n-1}+x_n)} & \mbox{for any $0\leq i\leq n-2$,}\\
 \frac {x_n-x_{n-1}}{(x_{n-1}+x_n)} & \mbox{if $i=n-1$,}\\
 \frac 2{(x_{n-1}+x_n)} & \mbox{if $i=n$.}
 \end{array}\right.$$
Its inverse is the homomorphism $\phi:B^n_{x_{n-1}+x_n}\to S^n_{x_n}$ defined by
$$\phi(x_i)=\left\{ \begin{array}{cl} \frac {x_i}{x_n} & \mbox{for any $0\leq i\leq n-2$,}\\
 \frac {1-x_{n-1}}{x_n} & \mbox{if $i=n-1$,}\\
 \frac {1+x_{n-1}}{x_n} & \mbox{if $i=n$.}
 \end{array}\right.$$
The computation of $H^i(B^n,W)$ with $n$ small is not very difficult. To achieve this, we need an additional auxiliary ring $C^n=\R[x_0,\ldots,x_n]/(\sum x_i^2+1)$.

\begin{lem}\label{comput_c1}
We have 
$$H^i(C^1,W)=\left\{ \begin{array}{cl} \Z/4 & \mbox{if $i=0$,}\\
0 & \mbox{if $i\geq 1$.}\end{array}\right.$$
\end{lem}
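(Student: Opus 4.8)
The plan is to compute the cohomology of the Gersten–Witt complex of $C^1 = \R[x_0,x_1]/(x_0^2+x_1^2+1)$ directly, using the localization sequence and our knowledge of Witt groups of curves over $\R$. First I would observe that $C^1$ is the coordinate ring of a smooth affine conic over $\R$ with no real points; since $x_0^2+x_1^2+1$ is a positive-definite form (of rank $2$) shifted by $1$, the real locus is empty, so every residue field $\kappa(x)$ at a closed point $x$ is a finite extension of $\R$ that is not $\R$ itself, hence $\kappa(x)\simeq\C$ (or possibly a genuinely larger field, but the non-real ones all have $W(\kappa(x))$ of $2$-torsion of small order). In fact $\mathrm{Spec}\,C^1$ is an open subscheme of the conic $\P^1_{\C}$ base-changed appropriately — more precisely, $C^1\otimes_\R\C$ splits, so $C^1$ is a form of $\C[t,t^{-1}]$-type, and its only closed points have residue field $\C$.

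Next I would write down the localization/Gersten–Witt complex
$$0 \to W(\kappa(C^1)) \to \bigoplus_{x\in (C^1)^{(1)}} W(\kappa(x)) \to 0,$$
so that $H^0(C^1,W) = \ker$ of the second-residue map and $H^1(C^1,W) = \mathrm{coker}$. Since $\dim C^1 = 1$ and $C^1$ is regular, these coincide with $W^0(C^1)=W(C^1)$ and $W^1(C^1)$. The generic fiber $\kappa(C^1)$ is the function field of a genus-$0$ curve over $\R$ with no real places; its Witt group is computed by the Milnor–Witt exact sequence for the rational function field of a conic, and one finds $W(\kappa(C^1))\simeq \Z/4$ — the key point being that $-1$ is a sum of two squares in this field but the level is exactly $2$, forcing the fundamental ideal structure to produce a cyclic group of order $4$ (the class $\langle 1,1\rangle$ is nonzero of order $2$, and $\langle 1 \rangle$ has order $4$). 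All the residue fields at closed points are $\C$ with $W(\C)=\Z/2$, and I would check that the total residue map $W(\kappa(C^1))\to\bigoplus_x W(\C)$ is injective (no unramified anisotropic form over the function field other than $0$, because over $C^1$ itself every such form would have to extend, but $C^1$ is affine of low dimension and the only candidates reduce to the "constant" ones already accounted for). That gives $H^0(C^1,W)=\Z/4$ and forces the cokernel story.

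For the vanishing $H^i(C^1,W)=0$ for $i\geq 1$: since $\dim C^1=1$ there is nothing to prove for $i\geq 2$, and for $i=1$ I would argue that $H^1(C^1,W)=W^1(C^1)=0$. The cleanest route is to use that $C^1$ deformation-retracts (in the relevant cohomological sense, via the isomorphism-type analysis above) onto something with known Witt theory, or directly: the cokernel of $W(\kappa(C^1))\to\bigoplus_x W(\kappa(x))$ is $\mathrm{Pic}(C^1)/2$-controlled via the comparison with $H^1(C^1,\overline I^1)=CH^1(C^1)/2 = \mathrm{Pic}(C^1)/2$, and $\mathrm{Pic}(C^1)$ is trivial (it is a localization of a PID-like ring, or: $\mathrm{Pic}$ of the smooth affine conic without real points is $0$ since the anisotropic conic over $\R$ has a degree-$2$ point generating $\mathrm{Pic}(\text{proj. model})\simeq\Z$ and removing points kills it). Combined with Lemma \ref{comparison} this pins down $H^1(C^1,W)=0$.

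I expect the main obstacle to be the precise identification $W(\kappa(C^1))\simeq\Z/4$ and the proof that the residue map out of it is injective — that is, genuinely understanding the Witt group of the function field of the anisotropic real conic. This requires knowing that this field has level $2$, that its only quadratic extensions contributing anisotropic forms are controlled, and assembling the Milnor–Witt conjecture / residue exact sequences for a curve of genus $0$ over $\R$ with empty real locus. Everything else (the structure of the complex, $W(\C)=\Z/2$, $\mathrm{Pic}=0$, and the degree shift) is bookkeeping once that input is in hand.
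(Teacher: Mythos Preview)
Your $H^0$ computation contains a genuine error and an internal contradiction. You claim $W(\kappa(C^1))\simeq\Z/4$, but this is false: the Witt group of the function field of a curve is infinite. Even granting $I^2(\kappa(C^1))=0$ (which is true here), one has $I(\kappa(C^1))\simeq \kappa(C^1)^*/(\kappa(C^1)^*)^2$, and the square-class group of a function field in one variable is infinite. You have conflated the Witt group of the ring $C^1$ (which \emph{is} $\Z/4$, and equals $H^0(C^1,W)$) with the Witt group of its fraction field, which sits at the start of the Gersten--Witt complex. The argument is also internally inconsistent: you correctly identify $H^0(C^1,W)$ as the kernel of the total residue map, then assert this map is injective, and then conclude $H^0=\Z/4$. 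Injectivity would force $H^0=0$.

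The paper does not attempt a direct Gersten--Witt computation for $H^0$; it simply quotes a known result from the literature that $W(C^1)=\Z/4$. For $H^1$, your idea of reducing to $\mathrm{Pic}(C^1)/2=0$ is the right one and matches the paper, but the reduction needs two steps you gloss over. First, one needs $H^1(C^1,W)=H^1(C^1,I)$; this is Lemma~\ref{deg_1}, not Lemma~\ref{comparison} (the latter only yields surjectivity of $H^1(C^1,I)\to H^1(C^1,W)$, or the tautology for $j=0$). Second, one needs $H^1(C^1,I)\simeq H^1(C^1,\overline I)$, and this holds because the complex for $I^2$ vanishes termwise: $I^2(\kappa(C^1))=0$ since the function field has level $2$ (the paper cites Elman--Lam for this), and $I(\kappa(x))=0$ at each closed point since all residue fields are $\C$.
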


\begin{proof}
Since $C^1$ is a curve, $H^i(C^1,W)=0$ for $i\geq 2$. We also get $H^0(C^1,W)=W^0(C^1)=\Z/4$, generated by the class of $\langle 1\rangle$ by \cite[Theorem 2.9]{Mon}. To compute $H^1$, we first observe that $H^1(C^1,I)=H^1(C^1,W)$ by Lemma \ref{deg_1}. 

Using \cite[Theorem 2']{EL}, we get that $I^2(\kappa(C^1))=0$. Moreover, since $C^1$ has no real point then $I(\kappa(x_1))=0$ for any closed point $x_1$. Therefore, $H^1(C^1,I)\simeq H^1(C^1,\overline I)$. The latter is just $Pic(C)/2$, which is trivial by \cite{Sw1}.
\end{proof}

\begin{lem}\label{comput_c2}
We have 
$$H^i(C^2,W)=\left\{ \begin{array}{cl} 
\Z/2 & \mbox{if $i=1$,}\\
0 & \mbox{if $i\geq 2$.}
\end{array}\right.$$
\end{lem}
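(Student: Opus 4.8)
The plan is to compute $H^i(C^2,W)$ by exploiting the fact that $C^2 = \R[x_0,x_1,x_2]/(x_0^2+x_1^2+x_2^2+1)$ has no real points, together with a localization sequence that expresses the Witt cohomology of $C^2$ in terms of the (already computed) Witt cohomology of $C^1$ and of a localized ring. First I would choose a convenient coordinate, say $x_2$, and split $\spec{C^2}$ as the open complement $U = \spec{C^2_{x_2}}$ and the closed subscheme $Y = \spec{C^2/x_2} \simeq \spec{C^1}$ (noting $C^2/x_2 \cong \R[x_0,x_1]/(x_0^2+x_1^2+1) = C^1$). Since $C^2$ is a smooth affine surface and $x_2$ is a regular element with regular zero locus, Lemma \ref{reg_param} gives $H^i_{x_2}(C^2,W) \simeq H^{i-1}(C^1,W)$, which by Lemma \ref{comput_c1} is $\Z/4$ for $i=1$ and $0$ otherwise. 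Then the localization long exact sequence
$$\xymatrix@C=1.2em{\ldots\ar[r] & H^i_{x_2}(C^2,W)\ar[r] & H^i(C^2,W)\ar[r] & H^i(C^2_{x_2},W)\ar[r] & H^{i+1}_{x_2}(C^2,W)\ar[r] & \ldots}$$
reduces the problem to computing $H^i(C^2_{x_2},W)$ and understanding the connecting maps.

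The next step is to compute $H^i(C^2_{x_2},W)$. Here I expect a hyperbola-type substitution analogous to the $\psi$/$\phi$ isomorphism in the excerpt: $C^2_{x_2}$ should be isomorphic to a localization of a ring of the form $\R[y_0,y_1,y_2]/(y_0^2 - y_1 y_2 + 1)$ (a split affine quadric, after completing the square on the $x_2^2 + 1$ part), localized at some linear form. Such a ring is, up to the localization, closely related to $\A^1 \times (\text{something})$ or at least to a rational smooth affine variety; in any case it has no real points either, so all its residue fields at real closed points vanish on $I$, and $I^2$ of its function field vanishes by \cite{EL} as in the proof of Lemma \ref{comput_c1}. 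This should force $H^i(C^2_{x_2},W)$ to be concentrated in low degrees and controlled by $CH^\bullet/2$ and $\mathrm{Pic}/2$ groups via the $\overline{I}$-filtration, which should be computable (likely trivial, or at most a $\Z/2$).

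With both $H^\bullet(C^2_{x_2},W)$ and $H^\bullet_{x_2}(C^2,W)$ in hand, I would read off $H^i(C^2,W)$ from the localization sequence. For $i \geq 2$: the support term $H^i_{x_2}(C^2,W) = 0$ and $H^{i+1}_{x_2}(C^2,W) = 0$, and $H^i(C^2_{x_2},W)$ should vanish for $i \geq 2$ (since $C^2_{x_2}$ is essentially a surface with the relevant vanishing), giving $H^i(C^2,W) = 0$ for $i \geq 2$ — this matches the claim. For $i = 1$: we have $0 \to \coker(H^0(C^2,W) \to H^0(C^2_{x_2},W)) \to H^1(C^2,W)$, then a $\Z/4$ coming from $H^1_{x_2}(C^2,W)$, then $H^1(C^2_{x_2},W)$, then $H^2_{x_2} = 0$. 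The key point will be to identify the connecting map $H^0(C^2_{x_2},W) \to H^1_{x_2}(C^2,W) = \Z/4$ and the map out of this $\Z/4$ precisely enough to see that the net contribution to $H^1(C^2,W)$ is exactly $\Z/2$.

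The main obstacle I anticipate is exactly this bookkeeping with the connecting homomorphisms — in particular showing that the $\Z/4 = H^1_{x_2}(C^2,W)$ contributes only a $\Z/2$ to $H^1(C^2,W)$, i.e. that its image is killed by a factor of $2$. I expect this comes from comparing with the restriction map $H^0(C^2,W) \to H^0(C^2_{x_2},W)$ on global sections: the form $\langle 1 \rangle$ on $C^1$ (which generates the $\Z/4$) should be the residue of a rank-one form on $C^2_{x_2}$ that does \emph{not} extend to $C^2$, but whose "double" does, or else the connecting map $H^0(C^2_{x_2},W)\to\Z/4$ has image $2\Z/4$. Pinning this down will require writing out the second-residue / boundary map at the prime $(x_2)$ explicitly, using the structure of $W(C^2_{x_2})$ as computed in the previous step; this is the technical heart of the computation.
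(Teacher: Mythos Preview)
Your localization strategy is a genuinely different route from the paper's, but it has a real gap at its heart. The ``hyperbola-type substitution'' you anticipate does not exist for $C^2$: the isomorphism $S^n_{x_n}\simeq B^n_{x_{n-1}+x_n}$ in the text works because, on the $S^n$ side, the two new linear coordinates are $(1-x_{n-1})/x_n$ and $(1+x_{n-1})/x_n$, whose product is $(1-x_{n-1}^2)/x_n^2$; the factorisation $1-x_{n-1}^2=(1-x_{n-1})(1+x_{n-1})$ is what converts the sum of two squares into a hyperbolic term. For $C^n$ the constant term has the opposite sign, the analogous expression is $-1-x_{n-1}^2$, and this is irreducible over $\R$; no change of variables takes $C^2_{x_2}$ to a localization of a split quadric $B^2$. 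Without that identification you have no independent computation of $H^i(C^2_{x_2},W)$, and the localization sequence tells you nothing. Your fall-back (``in any case it has no real points, so use the $\overline I$-filtration'') would require exactly the same ingredients --- vanishing of $H^\bullet(\,\cdot\,,I^2)$ from the absence of orderings, the Colliot-Th\'el\`ene--Scheiderer input, and $\mathrm{Pic}/2$ --- that one may as well apply directly to $C^2$, so the detour through $C^2_{x_2}$ gains nothing. Note also that your claim ``$H^2(C^2_{x_2},W)$ should vanish since $C^2_{x_2}$ is essentially a surface'' is not automatic (compare $H^2(S^2,W)=\Z$); it again needs the no-real-points argument.

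The paper's proof bypasses localization entirely and works with the $I^j$-filtration on $C^2$ itself. For $i\ge 3$ one uses $\dim C^2=2$. For $i=2$, since $C^2$ has no real points (and trivial canonical bundle) one has $H^2(C^2,I^2)=0$ by \cite[Proposition~5.1]{Fa4}, and the surjection $H^2(C^2,I^2)\to H^2(C^2,W)$ of Lemma~\ref{comparison} gives $H^2(C^2,W)=0$. For $i=1$, Lemma~\ref{deg_1} yields $H^1(C^2,W)=H^1(C^2,I)$; since no residue field of $C^2$ admits an ordering one has $H^i(C^2,I^2)=H^i(C^2,\overline I^2)$, and $H^1(C^2,\overline I^2)=0$ by \cite{CS}. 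Together with $H^2(C^2,I^2)=0$ this forces $H^1(C^2,I)\simeq H^1(C^2,\overline I)=\mathrm{Pic}(C^2)/2$, which is $\Z/2$ by \cite[Theorem~9.2(b)]{Sw1}.
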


\begin{proof}
Since $C^2$ is a surface, we have $H^i(C^2,W)=0$ if $i\geq 3$. Because $C^2$ has no real point, we have $H^2(C^2,I^2)=0$ by \cite[Proposition 5.1]{Fa4} (observe that $C^2$ has a trivial canonical bundle). This shows that $H^2(C^2,W)=H^2(C^2,I)=0$.

Lemma \ref{deg_1} shows that $H^1(C^2,I)=H^1(C^2,W)$. We then use the sequence comparing $I$ and $I^2$:
$$\xymatrix@C=1.4em{\ldots\ar[r] & H^1(C^2,I^2)\ar[r] & H^1(C^2,I)\ar[r] & H^1(C^2,\overline I)\ar[r] & H^2(C^2,I^2)\ar[r] & H^2(C^2,I)\ar[r] & 0.}$$
which reduces to 
$$\xymatrix@C=1.4em{\ldots\ar[r] & H^1(C^2,I^2)\ar[r] & H^1(C^2,I)\ar[r] & H^1(C^2,\overline I)\ar[r]& 0}$$
by the above discussion.

Since none of the residue fields of $\spec {C^2}$ admits an ordering, we obtain $H^i(C^2,I^2)=H^i(C^2,\overline I^2)$ (argue as in \cite{Suj}). Now $H^1(C^2,\overline I^2)=0$ by \cite[Corollary 3.3, Remark 3.3.1]{CS}. This implies that the map $H^1(C^2,I)\to H^1(C^2,\overline I)$ is an isomorphism. Since $Pic(C^2)/2=\Z/2$ by \cite[Theorem 9.2(b)]{Sw1}, we get $H^1(C^2,W)=\Z/2$.
\end{proof}

\begin{rem}\label{explicit_c2}
The generator of $H^1(C^2,W)$ can be explicitly obtained. Using \cite[Remark, p 471]{Sw1}, we find that the generator of $Pic(C^2)$ is the class of the prime ideal $\mathfrak p_1=(x_0^2+1, x_0x_1+x_2)$. Since $(x_0^2+1)(x_1^2+1)=(x_0x_1+x_2)(x_0x_1-x_2)$ and $(x_0x_1-x_2)\not\in\mathfrak p_1$ we see that $\mathfrak p_1C^2_{\mathfrak p_1}$ is generated by $x_0^2+1$. We thus get an isomorphism (necessarily symmetric) of $\kappa(\mathfrak p_1)$-vector spaces
$$\rho_{x_0^2+1}:\kappa(\mathfrak p_1)\to \exten 1{C^2_{\mathfrak p_1}}{\kappa(\mathfrak p_1)}{C^2_{\mathfrak p_1}}$$
defined by $1\mapsto Kos(x_0^2+1)$, where the latter is the Koszul complex associated to the regular parameter $x_0^2+1$. 

Consider next $\mathfrak p_2=(x_2)$. As above, we get a symmetric isomorphism of $\kappa(\mathfrak p_2)$-vector spaces
$$\rho_{x_2}:\kappa(\mathfrak p_2)\to \exten 1{C^2_{\mathfrak p_2}}{\kappa(\mathfrak p_2)}{C^2_{\mathfrak p_2}}$$
defined by $1\mapsto Kos(x_2)$, where the latter is the Koszul complex associated to the regular parameter $x_2$. 

The isomorphism $H^1(C^2,I)\simeq H^1(C^2,\overline I)$ (and the fact that $\mathfrak p_2$ is principal) shows that the class of the cycle $(\kappa(\mathfrak p_1),\rho_{x_0^2+1})+(\kappa(\mathfrak p_2),x_1\rho_{x_2})$ is the generator of $H^1(C^2,I)$ (observe also that we could have considered $(\kappa(\mathfrak p_1),\rho_{x_0^2+1})$ alone as a generator, but we will need $(\kappa(\mathfrak p_2),\rho_{x_2})$ later and this is why we consider it).
\end{rem}

Observe next that $B^1\simeq \R[x_0,x_0^{-1}]$ and $B^n/x_n=C^{n-2}[x_{n-1}]$ for $n\geq 2$, while $B^n_{x_n}\simeq \R[x_0,\ldots,x_{n-2},x_n,x_n^{-1}]=B^1[x_0,\ldots,x_{n-2}]$. 

\begin{lem}\label{comput_bn}
For $1\leq n\leq 3$, we have 
$$H^i(B^n,W)=\left\{ \begin{array}{cl} \Z\oplus \Z & \mbox{if $i=0$,}\\
0 & \mbox{if $i\geq 1$.}
\end{array}\right.$$
\end{lem}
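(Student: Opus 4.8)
## Proof proposal for Lemma \ref{comput_bn}

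The plan is to induct on $n$, using the localization sequence in $I$-cohomology together with the explicit descriptions $B^1 \simeq \R[x_0, x_0^{-1}]$, $B^n/x_n = C^{n-2}[x_{n-1}]$ and $B^n_{x_n} \simeq B^1[x_0, \ldots, x_{n-2}]$ that were recorded just before the statement.

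\textbf{Base case $n=1$.} Here $B^1 \simeq \R[x_0, x_0^{-1}]$. The Witt cohomology of this ring is classical: homotopy invariance (it is a localization of $\A^1_\R$) plus the localization sequence for the point $x_0 = 1$ (or a direct Mayer–Vietoris / $\G_m$-computation) gives $H^0(B^1, W) = W(\R) \oplus W(\R) = \Z \oplus \Z$ and $H^i(B^1, W) = 0$ for $i \geq 1$. Concretely, $\R[x_0,x_0^{-1}]$ is a smooth affine curve with two real points at infinity removed, so its Gersten–Witt complex computes $H^0 = \Z \oplus \Z$ and $H^1 = 0$; there is nothing in higher degree since it is a curve.

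\textbf{Inductive step.} Suppose $1 \leq n \leq 3$ and the result holds for $n-1$ (and we also have Lemmas \ref{comput_c1}, \ref{comput_c2} for the rings $C^1$, $C^2$, and homotopy invariance of $I$-cohomology to replace $C^{n-2}[x_{n-1}]$ by $C^{n-2}$). Apply the localization sequence of Section \ref{gwss} to $X = \spec{B^n}$, the closed subscheme $Y = \spec{B^n/x_n}$, and the open complement $U = \spec{B^n_{x_n}}$:
$$\xymatrix@C=1.2em{\cdots \ar[r] & H^i_{x_n}(B^n, W) \ar[r] & H^i(B^n, W) \ar[r] & H^i(B^n_{x_n}, W) \ar[r] & H^{i+1}_{x_n}(B^n, W) \ar[r] & \cdots}$$
Since $B^n/x_n = C^{n-2}[x_{n-1}]$ is regular, Lemma \ref{reg_param} identifies $H^i_{x_n}(B^n, W) \simeq H^{i-1}(B^n/x_n, W) \simeq H^{i-1}(C^{n-2}, W)$ by homotopy invariance. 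On the other side, $H^i(B^n_{x_n}, W) = H^i(B^1[x_0, \ldots, x_{n-2}], W) = H^i(B^1, W)$, again by homotopy invariance, which by the base case is $\Z \oplus \Z$ in degree $0$ and $0$ above. Feeding in Lemma \ref{comput_c1} for $n=3$ (where $C^{n-2} = C^1$ contributes $\Z/4$ in degree $0$) or the trivial ring for $n=2$ (where $C^0 = \R[x_0]/(x_0^2+1) = \C$ has $W(\C) = \Z/2$ and nothing higher) — wait, I should be careful: for $n=2$ the support term is $H^{i-1}(C^0, W)$, and for $n=3$ it is $H^{i-1}(C^1, W)$. In every case the support groups vanish for $i \geq 3$ and are finite (torsion) in low degrees, while the target $H^i(B^n_{x_n},W)$ is free. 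So in the sequence
$$0 \to H^0_{x_n}(B^n,W) \to H^0(B^n,W) \to H^0(B^1,W) \to H^1_{x_n}(B^n,W) \to H^1(B^n,W) \to \cdots$$
we have $H^0_{x_n}(B^n, W) = H^{-1}(C^{n-2},W) = 0$, so $H^0(B^n,W) \hookrightarrow \Z \oplus \Z$, and for $i \geq 1$ the groups $H^i(B^n,W)$ are squeezed between a torsion group and $0$.

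\textbf{Finishing.} To conclude $H^0(B^n,W) = \Z \oplus \Z$ exactly and $H^i(B^n,W) = 0$ for $i \geq 1$, I would show the connecting map $H^0(B^1, W) \to H^1_{x_n}(B^n, W)$ is zero — equivalently that the restriction $H^0(B^n,W) \to H^0(B^n_{x_n},W)$ is onto — by exhibiting explicit forms over $B^n$ (rank-one forms $\langle 1 \rangle$ and a suitable twist) whose restrictions generate $\Z \oplus \Z$; the class $\langle 1 \rangle$ and a hyperbolic-versus-diagonal comparison should do it, since $B^n$ is connected with real points. Once surjectivity in degree $0$ holds, the sequence forces $H^i(B^n,W) \cong H^i(B^n_{x_n},W) = 0$ for all $i \geq 1$: in degree $1$ it sits between the cokernel of a surjection ($=0$) and an injection into $H^1(B^1,W) = 0$, and for $i \geq 2$ it is trapped between $H^i_{x_n} = H^{i-1}(C^{n-2},W)$ — which is $0$ for $i \geq 2$ when $n-2 \leq 1$, since $C^1$ has no Witt cohomology above degree $0$ and $C^0 = \C$ likewise — and $H^i(B^1,W) = 0$.

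\textbf{Main obstacle.} The genuinely content-bearing step is the vanishing of the connecting homomorphism $H^0(B^n_{x_n}, W) \to H^1_{x_n}(B^n, W)$, i.e. the claim that global forms on $B^n$ restrict \emph{onto} the global forms on the localization $B^n_{x_n}$. I expect this to follow because the generators of $H^0(B^n_{x_n},W) \cong H^0(B^1,W) \cong W(\R)^{\oplus 2}$ are represented by the constant forms $\langle 1 \rangle$ and $\langle -1 \rangle$ (or $\langle 1 \rangle$ on the two "components at infinity"), which obviously extend to $B^n$; but making the two-dimensional free part precise — pinning down which two rank-one forms over $B^n$ map to a basis — requires tracking the isomorphism $B^n_{x_n} \simeq B^1[x_0,\ldots,x_{n-2}]$ and the homotopy-invariance identifications, and is the one spot where a small explicit computation is unavoidable. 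Everything else is formal manipulation of the localization sequence plus the already-proven Lemmas \ref{comput_c1} and \ref{comput_c2}.
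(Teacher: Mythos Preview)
Your overall skeleton (localization along $x_n$, identification of the support term with $H^{i-1}(C^{n-2},W)$ via Lemma~\ref{reg_param}, and homotopy invariance on the open part) matches the paper's proof.  The gap is at the ``content-bearing step'': your claim that the connecting map
\[
\partial:\ H^0(B^n_{x_n},W)\;\longrightarrow\; H^1_{x_n}(B^n,W)\simeq H^0(C^{n-2},W)
\]
vanishes is \emph{false}, and in fact if it were true the lemma would fail.  Indeed, if $\partial=0$ then the localization sequence gives an injection $H^0(C^{n-2},W)\hookrightarrow H^1(B^n,W)$, and $H^0(C^{n-2},W)$ is $\Z/2$ (for $n=2$) or $\Z/4$ (for $n=3$), so $H^1(B^n,W)\neq 0$.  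Your sentence ``in degree $1$ it sits between the cokernel of a surjection ($=0$)\ldots'' silently switches from $\partial=0$ to $\partial$ surjective.

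The underlying mistake is your description of the generators of $H^0(B^1,W)=W(\R[x,x^{-1}])$.  They are \emph{not} $\langle 1\rangle$ and $\langle -1\rangle$ (those span a single copy of $\Z$); a correct basis is $\langle 1\rangle$ and $\langle x_n\rangle$.  The form $\langle x_n\rangle$ does not extend to a non-degenerate form on $B^n$ because $x_n$ is not a unit there, so the restriction $H^0(B^n,W)\to H^0(B^n_{x_n},W)$ is not surjective.  What the paper does instead is show that $\partial$ is \emph{surjective}: the class $\langle x_n\rangle$ is sent to the generator of $H^0(C^{n-2},W)$.  This immediately gives $H^1(B^n,W)=\mathrm{coker}(\partial)=0$, and $H^0(B^n,W)=\ker(\partial)$ is a finite-index subgroup of $\Z\oplus\Z$, hence itself isomorphic to $\Z\oplus\Z$.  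For $i\geq 2$ your argument is fine.
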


\begin{proof}
Since $B^1=\R[x_0,x_0^{-1}]$, the result for $B^1$ follows from \cite[Corollary 5.2]{Gi1}. Since $B^n_{x_n}\simeq B^1[x_0,\ldots,x_{n-2}]$, the exact sequence of localization (and the homotopy invariance of the Witt cohomology, see \cite[Theorem 11.2.9]{Fa1}) yields  
$$\xymatrix{0\ar[r] & H^0(B^n,W)\ar[r] & H^0(B^1,W)\ar[r] & H^1_{x_n}(B^n,W)\ar[r] & \ldots}$$
Using Lemma \ref{reg_param} and the computation of $H^i(B^1,W)$, we get an exact sequence
$$\xymatrix{0\ar[r] & H^0(B^n,W)\ar[r] & \Z\oplus \Z\ar[r] & H^0(C^{n-2},W)\ar[r] & H^1(B^n,W)\ar[r] & 0}$$
and isomorphisms $H^{i-1}(C^{n-2},W)\simeq H^i(B^n,W)$ for $i\geq 2$. Since $C^0\simeq \C$, we have $H^{i-1}(C^0,W)=0$ for $i\geq 2$ and Lemma \ref{comput_c1} shows that the same statement is true for $C^1$. Hence $H^i(B^n,W)=0$ for $i\geq 2$ and it suffices to prove that the map $\Z\oplus \Z\to H^0(C^{n-2},W)$ in the above exact sequence is surjective to conclude. The two generators of $H^0(B^1,W)\simeq \Z\oplus \Z$ are the class of $\langle 1\rangle$ and $\langle x_n\rangle$. Now it is not hard to see that $\langle x_n\rangle$ is sent to the generator of $H^0(C^{n-2},W)$ (which is either $\Z/2$ or $\Z/4$) under the map $\Z\oplus \Z\to H^0(C^{n-2},W)$. Whence the result.
\end{proof}

Next we compute part of the Witt cohomology of $B^3_{x_2+x_3}$. It is of course also possible to compute $H^0(B^3_{x_{2}+x_3},W)$, but the answer is not "nice" and we won't need it.  

\begin{lem}\label{comput_b3}
We have 
$$H^i(B^3_{x_2+x_3},W)=\left\{ \begin{array}{cl}
\Z/2 & \mbox{if $i=1$,}\\
0 & \mbox{if $i\geq 2$.}
\end{array}\right.$$
\end{lem}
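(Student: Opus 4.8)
The plan is to compute $H^i(B^3_{x_2+x_3},W)$ by the same localization technique used for $B^n$ in Lemma \ref{comput_bn}, but now removing the closed locus $\{x_2+x_3=0\}$ instead of $\{x_n=0\}$. Write $D=B^3_{x_2+x_3}$, so that $\spec D$ is the complement in $\spec {B^3}$ of the closed subscheme $Y=\spec {B^3/(x_2+x_3)}$. Setting $x_3=-x_2$ in the defining equation $x_0^2+x_1^2-x_2x_3+1$ of $B^3$ turns it into $x_0^2+x_1^2+x_2^2+1$, so $Y$ is isomorphic to $\spec {C^2}$. Both $B^3$ and $C^2$ are smooth over $\R$ (their defining quadratic polynomials have nonvanishing gradient along the quadric), hence regular, and $B^3$ is a domain so $x_2+x_3$ is a nonzerodivisor. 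Thus the hypotheses of Lemma \ref{reg_param} are met, and taking $j\leq 0$ (so that $I^j=I^{j-1}=W$ by the convention) it gives isomorphisms $H^i_{x_2+x_3}(B^3,W)\simeq H^{i-1}(C^2,W)$ for all $i$.

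Next I would feed this into the localization long exact sequence
$$\cdots\to H^i_{x_2+x_3}(B^3,W)\to H^i(B^3,W)\to H^i(D,W)\to H^{i+1}_{x_2+x_3}(B^3,W)\to H^{i+1}(B^3,W)\to\cdots$$
By Lemma \ref{comput_bn} we have $H^i(B^3,W)=0$ for every $i\geq 1$; hence for $i\geq 1$ the sequence yields an isomorphism $H^i(D,W)\simeq H^{i+1}_{x_2+x_3}(B^3,W)\simeq H^i(C^2,W)$. Plugging in Lemma \ref{comput_c2}, namely $H^1(C^2,W)=\Z/2$ and $H^i(C^2,W)=0$ for $i\geq 2$, gives exactly $H^1(B^3_{x_2+x_3},W)=\Z/2$ and $H^i(B^3_{x_2+x_3},W)=0$ for $i\geq 2$, which is the claim. (The group $H^0(D,W)$ drops out: it only sits in a short exact sequence $0\to H^0(B^3,W)\to H^0(D,W)\to H^0(C^2,W)\to 0$, but as noted just before the lemma it is not needed.)

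I do not expect a genuine obstacle here; the computation is a formal consequence of the earlier lemmas. The two points that require care are: (i) checking that $x_2+x_3$ is a regular parameter whose vanishing locus is a regular scheme isomorphic to $\spec {C^2}$, so that Lemma \ref{reg_param} applies and the only bookkeeping is the index shift $I^j\rightsquigarrow I^{j-1}$ (harmless for $j\leq 0$); and (ii) keeping the indices in the localization sequence straight. All of the substantive content is already packaged into the computations of $H^i(C^2,W)$ and $H^i(B^3,W)$.
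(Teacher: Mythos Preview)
Your proof is correct and follows exactly the same route as the paper: identify $B^3/(x_2+x_3)$ with $C^2$, apply Lemma~\ref{reg_param} to rewrite the supported cohomology, and then use the localization sequence together with the vanishing of $H^i(B^3,W)$ for $i\geq 1$ from Lemma~\ref{comput_bn} to obtain $H^i(B^3_{x_2+x_3},W)\simeq H^i(C^2,W)$ for $i\geq 1$. The paper's proof is simply a terser version of what you wrote.
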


\begin{proof}
Since we already know the Witt cohomology groups of $B^3$ (Lemma \ref{comput_bn}) and $B^3/(x_2+x_3)=C^2$ (Lemma \ref{comput_c2}), we only have to use the exact sequence of localization. The connecting homomorphism yields isomorphisms $H^i(B^3_{x_2+x_3},W)\simeq H^i(C^2)$ for $i=1,2,3$. This shows the assertion.
\end{proof}

\begin{rem}\label{explicit_b3}
We can give an explicit generator of $H^1(B^3_{x_2+x_3},W)$. Consider the prime ideals $\mathfrak q_1=(x_0^2+1)$ and $\mathfrak q_2=(x_2)$. Taking the Koszul complexes associated to these regular sequences as generators of respectively $\exten 1{B^3_{\mathfrak q_1}}{\kappa({\mathfrak q_1})}{B^3_{\mathfrak q_1}}$ and $\exten 1{B^3_{\mathfrak q_2}}{\kappa({\mathfrak q_2})}{B^3_{\mathfrak q_2}}$, we get symmetric isomorphisms
$$\rho_{x_0^2+1}:\kappa({\mathfrak q_1})\to \exten 1{B^3_{\mathfrak q_1}}{\kappa({\mathfrak q_1})}{B^3_{\mathfrak q_1}}$$
and 
$$\rho_{x_2}:\kappa({\mathfrak q_2})\to \exten 1{B^3_{\mathfrak q_2}}{\kappa({\mathfrak q_2})}{B^3_{\mathfrak q_2}}$$
as in Remark \ref{explicit_c2}. Consider the element $(\kappa({\mathfrak q_1}),(x_0x_1+x_2)\rho_{x_0^2+1})+(\kappa({\mathfrak q_2}), x_1x_3\rho_{x_2})$ in $\bigoplus_{x_1\in \spec {B^3}^{(1)}}W(\kappa(x_1))$. A straightforward computation shows that the element $(\kappa({\mathfrak q_2}), x_1x_3\rho_{x_2})$ is ramified exactly in the prime ideals $(x_1,x_2)$ and $(x_2,x_3)$ (which is the same as $(x_2,x_2+x_3)$). On the other hand, $(\kappa({\mathfrak q_1}),(x_0x_1+x_2)\rho_{x_0^2+1})$ is ramified in both $(x_1,x_2)$ and in the prime ideal $\mathfrak p_1$ of Remark \ref{explicit_c2}. Indeed, the equality in $B^3$
$$(x_0x_1+x_2)(x_0x_1-x_2)-(x_0^2+1)(x_1^2+1)+x_2(x_2+x_3)=0$$
shows that a prime ideal containing $(x_0x_1+x_2)$ and $x_0^2+1$ must contain either $x_2$, either $x_2+x_3$. Such an ideal must therefore contain $(x_1,x_2)$ in the first case, and $(x_2+x_3, x_0^2+1,x_0x_1+x_2)=\mathfrak p_1$ (seen as an ideal in $B^3$) in the second case.

To check that $(\kappa({\mathfrak q_1}),(x_0x_1+x_2)\rho_{x_0^2+1})+(\kappa({\mathfrak q_2}), x_1x_3\rho_{x_2})$ indeed defines a class in $H^1(B^3_{x_2+x_3},W)$, we must compute its image under the differential in the Gersten-Witt complex associated to $B^3_{x_2+x_3}$. The image of $(\kappa({\mathfrak q_1}),(x_0x_1+x_2)\rho_{x_0^2+1})$ is the localization in the minimal prime ideals of its support of the complete intersection module $M_1:=B^3_{x_2+x_3}/(x_0^2+1,x_0x_1+x_2)$ endowed with the symmetric isomorphism
$$\rho_{x_0^2+1,x_0x_1+x_2}:M_1\to \exten 2{B^3_{x_2+x_3}}{M_1}{B^3_{x_2+x_3}}$$
defined by $\rho_{x_0^2+1,x_0x_1+x_2}(1)=Kos(x_0^2+1,x_0x_1+x_2)$, where the latter is the Koszul complex given by the regular sequence $(x_0^2+1,x_0x_1+x_2)$. It follows from the discussion above that the only minimal prime ideal in the support of $M_1$ is $(x_1,x_2)$. To compute the localization of $M_1$ and $\rho_{x_0^2+1,x_0x_1+x_2}$ in the prime ideal $(x_1,x_2)$ we compare the regular sequences $(x_0^2+1,x_0x_1+x_2)$ and $(x_1,x_2)$. Since $x_0^2+1=x_2x_3-x_1^2$ in $B^3$, we get 
$$\begin{pmatrix} -x_1 & x_3 \\ x_0 & 1\end{pmatrix}\begin{pmatrix} x_1 \\ x_2\end{pmatrix}=\begin{pmatrix} x_0^2+1 \\ x_0x_1+x_2\end{pmatrix}.$$
The determinant of the matrix is $-x_1-x_0x_3$, which is not in the ideal $(x_1,x_2)$ and therefore the regular sequences $(x_0^2+1,x_0x_1+x_2)$ and  $(x_1,x_2)$ generate the same ideal in $B^3_{(x_1,x_2)}$ and $Kos(x_0^2+1,x_0x_1+x_2)=(-x_1-x_0x_3)Kos(x_1,x_2)$. Since the residue field at the prime ideal $(x_1,x_2)$ is $\C(x_3)$ and the class of $-x_1-x_0x_3$ in this field is $-ix_3$, we see that the image of $(\kappa({\mathfrak q_1}),(x_0x_1+x_2)\rho_{x_0^2+1})$ under the differential in the Gersten-Witt complex is the symmetric isomorphism
$$\C(x_3)\to \exten 2{B^3_{(x_1,x_2)}}{\C(x_3)}{B^3_{(x_1,x_2)}}$$
defined by $1\mapsto -ix_3Kos(x_1,x_2)$. Since $-i$ is a square, this is equivalent to the symmetric isomorphism
$$\C(x_3)\to \exten 2{B^3_{(x_1,x_2)}}{\C(x_3)}{B^3_{(x_1,x_2)}}$$
defined by $1\mapsto x_3Kos(x_1,x_2)$. It is easy to check that the image of $(\kappa({\mathfrak q_2}), x_1x_3\rho_{x_2})$ is the symmetric isomorphism
$$\C(x_3)\to \exten 2{B^3_{(x_1,x_2)}}{\C(x_3)}{B^3_{(x_1,x_2)}}$$
defined by $1\mapsto x_3Kos(x_2,x_1)$. Since $Kos(x_1,x_2)=-Kos(x_2,x_1)$, we see that $(\kappa({\mathfrak q_1}),(x_0x_1+x_2)\rho_{x_0^2+1})+(\kappa({\mathfrak q_2}), x_1x_3\rho_{x_2})$ indeed defines an element of the group $H^1(B^3_{x_2+x_3},W)$.

We next show that its image under the isomorphism 
$$H^1(B^3_{x_2+x_3},W)\to H^1(C^2,W)$$ 
is precisely the generator obtained in Remark \ref{explicit_c2}. We start with the computation of the image of $(\kappa({\mathfrak q_1}),(x_0x_1+x_2)\rho_{x_0^2+1})$. Arguing as above and using the equality 
$$(x_0x_1+x_2)(x_0x_1-x_2)=(x_0^2+1)(x_1^2+1)-x_2(x_2+x_3)$$
we see that $Kos(x_0^2+1,x_0x_1+x_2)=-x_2/(x_0x_1-x_2)Kos(x_0^2+1,x_2+x_3)$ in $\exten 2{B^3_{\mathfrak p_1}}{\kappa(\mathfrak p_1)}{B^3_{\mathfrak p_1}}$. Since $(x_0x_1-x_2)=-2x_2$ in $\kappa(\mathfrak p_1)$ (because $x_0x_1+x_2$ is in $\mathfrak p_1$) and $-1/2$ is a square, we see that the image of $(\kappa({\mathfrak q_1}),(x_0x_1+x_2)\rho_{x_0^2+1})$ is precisely the element $(\kappa(\mathfrak p_1),\rho_{x_0^2+1})$ defined in Remark \ref{explicit_c2}. It remains to compute the image of $(\kappa({\mathfrak q_2}), x_1x_3\rho_{x_2})$, but this is straightforward. 
\end{rem}

\subsection{Witt cohomology of $S^2$}
Let $S^2=\R[x_0,x_1,x_2]/(\sum x_i^2-1)$. As in \cite{DF}, we use the stereographic projection to compute the Witt cohomology of $S^2$ (and therefore its Witt groups). Explicitly, consider the algebra $\R[x_0,x_1]$ and the polynomial $f=x_0^2+x_1^2+1$. Then there is an isomorphism
$$S^2_{1-x_2}\to \R[x_0,x_1]_{f}$$
defined by $x_i\mapsto 2x_i/f$ for $i=0,1$ and $x_2\mapsto (f-2)/f$. Using the exact sequence of localization (see \cite[Theorem 1.5.5]{BalSurvey}), we get an exact sequence
$$\xymatrix@C=1.4em{0\ar[r] & H^0(S^2,W)\ar[r] & H^0(S^2_{1-x_2},W)\ar[r] & H^1_{1-x_2}(S^2,W)\ar[r] & H^1(S^2,W)\ar[r] & \ldots}$$
First we compute the Witt cohomology of $S^2_{1-x_2}$, which is the same as the Witt cohomology of $\R[x_0,x_1]_{f}$. Using the localization sequence again, we have an exact sequence:
$$\xymatrix@C=1.4em{0\ar[r] & H^0(\A^2,W)\ar[r] & H^0(\R[x_0,x_1]_{f},W)\ar[r] & H^1_f(\A^2,W)\ar[r] & H^1(\A^2,W)\ar[r] & \ldots}$$
By homotopy invariance, $H^0(\A^2,W)=\Z$ and $H^i(\A^2,W)=0$ if $i\geq 1$. Because $f$ is a regular parameter and $\R[x_0,x_1]/{f}=C^1$, we have $H^i_f(\A^2,W)\simeq H^{i-1}(C^1,W)$ by Lemma \ref{reg_param}.

The sequence and Lemma \ref{comput_c1} imply that $H^i(\R[x_0,x_1]_f,W)=0$ if $i\geq 1$. To compute $H^0(\R[x_0,x_1]_f)$, observe that the map $H^0(\R,W)\to H^0(\R[x_0,x_1]_f,W)$ admits a retraction given by the choice of any rational point. Therefore we get $H^0(\R[x_0,x_1]_f,W)\simeq \Z\oplus \Z/4$, the factor $\Z/4$ being generated by the class of the form $\langle f\rangle$. 

It remains to compute the groups $H^i_{1-x_2}(S^2,W)$. Remark that $1-x_2$ is a regular parameter in $S^2_{x_1}$ and that $S^2_{x_1}/(1-x_2)\simeq \C[x_1,x_1^{-1}]$. Again, we have an exact sequence of localization
$$\xymatrix@C=1.4em{0\ar[r] & H^1_{1-x_2}(S^2,W)\ar[r] & H^1_{1-x_2}(S^2_{x_1},W)\ar[r] & H^2_{1-x_2,x_1}(S^2,W)\ar[r] & \ldots.}$$
Now the only point of codimension $2$ with support on $(1-x_2,x_1)$ is the maximal ideal $(1-x_2,x_1,x_0)$ and then $H^2_{1-x_2,x_1}(S^2,W)\simeq \Z$. Since $1-x_2$ is a regular parameter in $S^2_{x_1}$, we get 
$$H^1_{1-x_2}(S^2_{x_1},W)\simeq H^0(S^2_{x_1}/(1-x_2),W)\simeq H^0(\C[x_1,x_1^{-1}],W)\simeq \Z/2 \oplus \Z/2,$$ 
the second factor being generated by the class of the form $\langle x_1\rangle$ (\cite[Theorem 5.6]{Gi1}). We also get $H^2_{1-x_2}(S^2_{x_1},W)=0$ and therefore $H^2_{1-x_2}(S^2,W)\simeq \Z$.

Finally, we can compute the Witt cohomology of $S^2$:

\begin{thm}\label{s2}
Let $S^2$ be the algebraic real $2$-sphere. Then 
$$H^i(S^2,W)\simeq \left\{ \begin{array}{ll} \Z\oplus \Z/2 & \mbox{if $i=0$}\\
\Z/2 & \mbox{if $i=1$}\\
\Z & \mbox{if $i=2$}\\
0 & \mbox{if $i\geq 3$}
\end{array}\right. $$
\end{thm}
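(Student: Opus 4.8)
The plan is to run the localization sequence for the open immersion $S^2_{1-x_2}\hookrightarrow S^2$ against the groups computed above. Since $S^2$ is a regular affine surface over $\R$, its Gersten-Witt complex lives in degrees $0,1,2$, so $H^i(S^2,W)=0$ for $i\geq 3$ with nothing to prove. The inputs I would assemble are: $H^0(S^2_{1-x_2},W)\simeq\Z\oplus\Z/4$, with the torsion $\Z/4$ generated by the class of $\langle f\rangle$, and $H^i(S^2_{1-x_2},W)=0$ for $i\geq 1$; together with $H^1_{1-x_2}(S^2,W)\simeq\Z/2\oplus\Z/2$ (the map $H^1_{1-x_2}(S^2_{x_1},W)\to H^2_{1-x_2,x_1}(S^2,W)\simeq\Z$ in the displayed support sequence being zero, as a homomorphism from a torsion group to a free one), $H^2_{1-x_2}(S^2,W)\simeq\Z$, and $H^i_{1-x_2}(S^2,W)=0$ for $i\notin\{1,2\}$ (there is no codimension-zero point of $S^2$ in $V(1-x_2)$, and $S^2$ is two-dimensional). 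Feeding these into
$$0\to H^0(S^2,W)\to H^0(S^2_{1-x_2},W)\xrightarrow{\partial} H^1_{1-x_2}(S^2,W)\to H^1(S^2,W)\to H^1(S^2_{1-x_2},W)\to\cdots,$$
the portion around degree $2$ gives $H^2(S^2,W)\simeq H^2_{1-x_2}(S^2,W)\simeq\Z$ at once, while the portion in degrees $0$ and $1$ identifies $H^0(S^2,W)$ with $\ker{\partial}$ and $H^1(S^2,W)$ with $\coker{\partial}$, for the connecting map $\partial\colon\Z\oplus\Z/4\to\Z/2\oplus\Z/2$.

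It remains to pin down $\partial$. It kills the $\Z$-summand of $H^0(S^2_{1-x_2},W)$, since that summand is the image of $H^0(\R,W)$ under pullback, which factors through $H^0(S^2,W)$; hence $\partial$ is completely determined by $\partial(\langle f\rangle)$. Since the target is $2$-torsion, $\partial(\langle f\rangle)$ has order dividing $2$, so $\ker{\partial}\supseteq\Z\oplus\Z/2$ and $\coker{\partial}$ is a quotient of $\Z/2\oplus\Z/2$ by a cyclic subgroup of order at most $2$. The whole theorem thus reduces to showing $\partial(\langle f\rangle)\neq 0$: granting this, $\ker{\partial}=\Z\oplus\Z/2$ and $\mathrm{im}\,\partial\simeq\Z/2$, giving $H^0(S^2,W)\simeq\Z\oplus\Z/2$ and $H^1(S^2,W)\simeq\Z/2$.

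To prove $\partial(\langle f\rangle)\neq 0$ I would compute the residue explicitly. Under the stereographic identification $S^2_{1-x_2}\simeq\R[x_0,x_1]_f$ one has $x_2\mapsto(f-2)/f$, so $1-x_2$ corresponds to $2/f$; hence $\langle f\rangle$ corresponds to $\langle 2/(1-x_2)\rangle=\langle 2(1-x_2)\rangle$ in $W(S^2_{1-x_2})$. The divisor $V(1-x_2)\subset S^2$ has coordinate ring $\R[x_0,x_1]/(x_0^2+x_1^2)$, a domain since $x_0^2+x_1^2$ is irreducible over $\R$; its generic point $\eta$ gives a discrete valuation of $\R(S^2)$ with uniformizer $1-x_2$ and residue field $\kappa(\eta)=\mathrm{Frac}(\R[x_0,x_1]/(x_0^2+x_1^2))$, in which $(x_0/x_1)^2=-1$, so $\kappa(\eta)\supset\C$ and in particular $2$ is a square there. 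Now $\partial(\langle f\rangle)$ is represented, in the Gersten-Witt complex of $S^2$ supported on $V(1-x_2)$, by the boundary of $\langle 2(1-x_2)\rangle$; since $2(1-x_2)$ is a unit at every codimension-one point except $\eta$, this boundary is concentrated at $\eta$, where it is the second residue $\langle\overline{2}\rangle=\langle 1\rangle\in W(\kappa(\eta))$. As the degree-$0$ term of that supported complex vanishes, this element is its own cohomology class, and it is nonzero because $\langle 1\rangle\neq 0$ in $W(\C(x_1))$ (a rank-one form is never hyperbolic). Hence $\partial(\langle f\rangle)\neq 0$, and the theorem follows.

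The only genuine obstacle is this last step. What makes it work is, first, that $V(1-x_2)$ is irreducible — so there is a single divisorial valuation to inspect and the residue lands in a field rather than in a product of Witt groups — and, second, that its function field contains a square root of $-1$, which is what forces $2$ to be a square and keeps $\langle\overline 2\rangle$ from dying. Once these two facts are in hand the residue computation is very short, and everything else is bookkeeping with the two localization sequences and the groups computed above.
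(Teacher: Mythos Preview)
Your argument is correct and follows exactly the paper's approach: run the localization sequence for $S^2_{1-x_2}\hookrightarrow S^2$, use the computed groups of $S^2_{1-x_2}$ and the supported groups along $V(1-x_2)$, and identify the connecting map by sending the generator $\langle f\rangle$ (equivalently $\langle 1-x_2\rangle$) to $\langle 1\rangle$ in the first $\Z/2$ factor of $H^1_{1-x_2}(S^2,W)$. You actually supply more detail than the paper does, in particular the observation that $H^1_{1-x_2}(S^2,W)\simeq\Z/2\oplus\Z/2$ because the map to $H^2_{1-x_2,x_1}(S^2,W)\simeq\Z$ has torsion source, and the explicit residue computation; one small simplification is that you need not invoke $\kappa(\eta)\supset\C$ to make $2$ a square, since $2$ is already a square in $\R$.
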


\begin{proof}
We just have to examine the exact sequence
$$\xymatrix@C=1.4em{0\ar[r] & H^0(S^2,W)\ar[r] & H^0(S^2_{1-x_2},W)\ar[r] & H^1_{1-x_2}(S^2,W)\ar[r] & H^1(S^2,W)\ar[r] & \ldots}$$
In light of the above computations, the only difficulty is to understand the map $H^0(S^2_{1-x_2},W)\to H^1_{1-x_2}(S^2,W)$. But it is not hard to see that the form $\langle 1-x_2\rangle$ generating the factor $\Z/4$ in $H^0(S^2_{1-x_2},W)$ is sent to the form $\langle 1\rangle$ generating the first $\Z/2$ factor in $H^1_{1-x_2}(S^2,W)$.
\end{proof}

\begin{rem}\label{explicit_s2}
Let $\mathfrak q$ be the prime ideal of $S^2$ generated by $(1-x_2)$. The Koszul complex $Kos(1-x_2)$ associated to $(1-x_2)$ can be taken as a generator of $\exten 1{S^2_{\mathfrak q}}{\kappa(\mathfrak q)}{S^2_{\mathfrak q}}$ and we get a symmetric isomorphism
$$\rho_{1-x_2}:\kappa(\mathfrak q)\to \exten 1{S^2_{\mathfrak q}}{\kappa(\mathfrak q)}{S^2_{\mathfrak q}}$$
defined by $\rho_{1-x_2}(1)=Kos(1-x_2)$. Our computations above proves that a generator of $H^1(S^2,W)$ is given by the class of the cycle $(\kappa(\mathfrak q),x_1\rho_{1-x_2})$. 
\end{rem}

\subsection{The computation of $W^2(S^3)$}

We finally compute $W^2(S^3)\simeq H^2(S^3,W)$. To achieve this, we use our computations of the previous sections.

\begin{thm}\label{s3prime}
Let $S^3$ be the algebraic real $3$-sphere. Then $W^2(S^3)=0$.
\end{thm}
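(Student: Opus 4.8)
The plan is to compute $H^2(S^3,W)$ using the localization sequence associated to the open immersion $\spec{S^3_{x_3}}\hookrightarrow \spec{S^3}$ with closed complement $\spec{S^3/x_3}$. First I would identify the two ``pieces'': the closed subscheme $\spec{S^3/x_3}$ is cut out by a regular parameter, so by Lemma \ref{reg_param} the cohomology with support $H^i_{x_3}(S^3,W)$ is $H^{i-1}(S^2,W)$, which is known by Theorem \ref{s2}; and the open piece $\spec{S^3_{x_3}}$ is isomorphic to $B^3_{x_2+x_3}$ via the isomorphism $\psi$ of Section \ref{witt}, whose Witt cohomology in degrees $\geq 1$ is given by Lemma \ref{comput_b3}. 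Plugging these into
$$\xymatrix@C=1.2em{\ldots\ar[r] & H^2_{x_3}(S^3,W)\ar[r] & H^2(S^3,W)\ar[r] & H^2(S^3_{x_3},W)\ar[r] & H^3_{x_3}(S^3,W)\ar[r] & \ldots}$$
gives $H^2_{x_3}(S^3,W)=H^1(S^2,W)=\Z/2$, $H^2(S^3_{x_3},W)=0$, and $H^3_{x_3}(S^3,W)=H^2(S^2,W)=\Z$; so the sequence reads
$$\xymatrix@C=1.2em{H^1(S^3_{x_3},W)\ar[r] & \Z/2\ar[r] & H^2(S^3,W)\ar[r] & 0.}$$
Thus it suffices to show that the boundary map $\partial: H^1(S^3_{x_3},W)\to H^2_{x_3}(S^3,W)=\Z/2$ is surjective.

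The heart of the proof, then, is this surjectivity, and this is where the explicit generators computed in Remarks \ref{explicit_c2}, \ref{explicit_b3}, \ref{explicit_s2} come in. By Lemma \ref{comput_b3} (and Remark \ref{explicit_b3}) the group $H^1(B^3_{x_2+x_3},W)=H^1(S^3_{x_3},W)$ is $\Z/2$ generated by the explicit cycle involving $\rho_{x_0^2+1}$ and $\rho_{x_2}$; I would push this generator through $\psi^{-1}=\phi$ to get an explicit cycle on $S^3_{x_3}$, and then compute its residue (boundary) along the divisor $x_3=0$. The residue lands in the Gersten-Witt group of points of $S^2=\spec{S^3/x_3}$, and I need to check that the resulting class is the generator $(\kappa(\mathfrak q), x_1\rho_{1-x_2})$ of $H^1(S^2,W)$ identified in Remark \ref{explicit_s2} (up to a square, as usual). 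Concretely this is a second-residue/ramification computation: one must track which prime ideals of $S^3$ of codimension $1$ lying over $x_3=0$ the cycle ramifies in, compare Koszul generators there to the standard parameter $1-x_2$ of $S^2$, and read off the unit that appears — exactly the kind of bookkeeping already carried out in Remark \ref{explicit_b3} for the comparison map $H^1(B^3_{x_2+x_3},W)\to H^1(C^2,W)$.

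The main obstacle I expect is precisely this explicit residue computation: one has to be careful that the cycle representing the generator of $H^1(S^3_{x_3},W)$ is chosen so that its residue along $x_3=0$ is supported on a single well-understood codimension-one point of $S^2$ (the prime $(1-x_2)$), and that the unit datum matches $x_1\rho_{1-x_2}$ modulo squares. There is also a minor point to dispatch: I should confirm $H^2(S^3_{x_3},W)=0$ and $H^1(S^3_{x_3},W)=\Z/2$ genuinely follow from Lemma \ref{comput_b3} via the isomorphism $\psi$, which is immediate since $\psi$ is a ring isomorphism and Witt cohomology is a functor of rings. Once surjectivity of $\partial$ is established, the four-term exact sequence forces $H^2(S^3,W)=0$, and since $\K{S^3}=W^2(S^3)=H^2(S^3,W)$ (as noted at the start of Section \ref{witt}), this completes the proof. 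A cleaner, less computational route — if available — would be to find an \emph{a priori} reason the composite $H^0(S^3_{x_3},W)\to H^1_{x_3}(S^3_{x_3},\dots)\to H^1(S^2,W)$ hits the generator, but absent that I would simply grind through the ramification bookkeeping as above.
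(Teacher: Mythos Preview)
Your proposal is correct and follows essentially the same route as the paper: the same localization sequence along $x_3$, the same identifications $H^2_{x_3}(S^3,W)\simeq H^1(S^2,W)$ via Lemma \ref{reg_param} and $S^3_{x_3}\simeq B^3_{x_2+x_3}$, the same reduction to surjectivity of $H^1(S^3_{x_3},W)\to H^1(S^2,W)$, and the same plan of transporting the explicit generator of Remark \ref{explicit_b3} through $\phi$ and computing its residue. The paper carries out exactly the ramification bookkeeping you anticipate, finding that the $\mathfrak q_1$-summand contributes trivially while the $\mathfrak q_2$-summand maps to the generator $(\kappa(\mathfrak q),x_1\rho_{1-x_2})$ of Remark \ref{explicit_s2}.
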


\begin{proof}
We use the localization sequence
$$\xymatrix@C=1.2em{\ldots\ar[r] & H^1(S^3,W)\ar[r] & H^1(S^3_{x_3},W)\ar[r] & H^2_{x_3}(S^3,W)\ar[r] & H^2(S^3,W)\ar[r] & H^2(S^3_{x_3},W).}$$
Since $S^3_{x_3}\simeq B^3_{x_2+x_3}$, we get $H^2(S^3_{x_3},W)=0$ by Lemma \ref{comput_b3}. Using Lemma \ref{reg_param}, we get an isomorphism $H^2_{x_3}(S^3,W)\simeq H^1(S^2,W)$. The above exact sequence reduces then to an exact sequence
$$\xymatrix@C=1.2em{\ldots\ar[r] & H^1(S^3,W)\ar[r] & H^1(S^3_{x_3},W)\ar[r] & H^1(S^2,W)\ar[r] & H^2(S^3,W)\ar[r] & 0.}$$
It suffices therefore to prove that the homomorphism $H^1(S^3_{x_3},W)\to H^1(S^2,W)$ is surjective to conclude. Using the isomorphism $\phi:B^3_{x_2+x_3}\to S^3_{x_3}$ and the explicit generator of Remark \ref{explicit_b3}, we get an explicit generator of $H^1(S^3_{x_3},W)$ as follows:

We get $\phi(\mathfrak q_1)=(x_0^2+x_3^2)=:\mathfrak q_1^\prime$ and 
$$\phi(Kos(x_0^2+1))=Kos(x_0^2/x_3^2+1)=(1/x_3^2)Kos(x_0^2+x_3^2).$$
Moreover, $\phi(x_0x_1+x_2)=(1/x_3^2)(x_0x_1+(1-x_2)x_3)$. Letting $\rho_{x_0^2+x_3^2}$ be the usual symmetric isomorphism, we see that the image of $(\kappa(\mathfrak q_1),(x_0x_1+x_2)\rho_{x_0^2+1})$ under $\phi$ is $(\kappa(\mathfrak q_1^\prime),(x_0x_1+(1-x_2)x_3)\rho_{x_0^2+x_3^2})$ (since we work in a Witt group, we can forget the factors $(1/x_3^2)$).

Similarly, we get $\phi(\mathfrak q_2)=(1-x_2)=:\mathfrak q_2^\prime$, $\phi(Kos(x_2))=(1/x_3)Kos(1-x_2)$ and $\phi(x_1x_3)=(1/x_3^2)x_1(1+x_2)$. Hence the image of $(\kappa(\mathfrak q_2),x_1x_3\rho_{x_2})$ under $\phi$ is $(\kappa(\mathfrak q_2^\prime),x_1x_3\rho_{1-x_2})$ since $1+x_2\equiv 2$ modulo $\mathfrak q_2^\prime$ and $2$ is a square.

We are therefore reduced to compute the image of 
$$(\kappa(\mathfrak q_1^\prime),(x_0x_1+(1-x_2)x_3)\rho_{x_0^2+x_3^2})+ (\kappa(\mathfrak q_2^\prime),x_1x_3\rho_{1-x_2})$$ 
under the homomorphism $H^1(S^3_{x_3},W)\to H^1(S^2,W)$. A direct computation shows that the image of $(\kappa(\mathfrak q_1^\prime),(x_0x_1+(1-x_2)x_3)\rho_{x_0^2+x_3^2})$ is trivial, while the image of $(\kappa(\mathfrak q_2^\prime),x_1x_3\rho_{1-x_2})$ is the cycle $(\kappa(\mathfrak q),x_1\rho_{1-x_2})$ of Remark \ref{explicit_s2}. Hence the homomorphism $H^1(S^3_{x_3},W)\to H^1(S^2,W)$ is surjective and the result is proved.
\end{proof}

\begin{cor}\label{s3}
Let $S^3$ be the algebraic real $3$-sphere. Then $\K {S^3}=0$.
\end{cor}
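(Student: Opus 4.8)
The plan is to deduce Corollary \ref{s3} from Theorem \ref{s3prime} together with the identification of $\K A$ with a derived Witt group. Recall from Section \ref{symplectic} that $\K {S^3}$ is by definition the quotient of $GW^2(S^3)$ by the subgroup generated by $H(S^3)$, and that the cokernel of $H:K_0(S^3)\to GW^2(S^3)$ is the Witt group $W^2(S^3)$. The first step is therefore to argue that the natural surjection $W^2(S^3)\to \K {S^3}$ is in fact an isomorphism under our hypotheses. This needs a comparison between the image of the whole of $K_0(S^3)$ and the image of the single generator $H(S^3)$ inside $GW^2(S^3)$; equivalently one must know that $H:K_0(S^3)\to GW^2(S^3)$ factors (after quotienting the target by $H(S^3)$) through a torsion-free rank-one quotient of $K_0(S^3)$ that is killed, i.e. that $\widetilde K_0(S^3)$ contributes nothing. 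Since $S^3$ is the coordinate ring of the real $3$-sphere, every finitely generated projective module is stably free (this is the classical stable computation cited in the introduction, \cite{Fo}, \cite{Sw2}), so $\widetilde K_0(S^3)=0$ and $K_0(S^3)=\Z$, generated by $[S^3]$. Hence $H(K_0(S^3))$ is exactly the subgroup generated by $H(S^3)$, which gives $\K {S^3}\simeq W^2(S^3)$.

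The second step is simply to invoke Theorem \ref{s3prime}, which says $W^2(S^3)=0$. Combining the two steps yields $\K {S^3}=0$.

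The only point requiring a little care --- and hence the main (minor) obstacle --- is the first step: one must be careful that $\K A$ is the quotient by the \emph{subgroup generated by} $H(A)$ (i.e. by the hyperbolic on the rank-one free module only), not by the whole image of the hyperbolic map $H:K_0(A)\to GW^2(A)$. These two subgroups coincide precisely when $\widetilde K_0(A)$ maps to $0$ in $GW^2(A)/\langle H(A)\rangle$, which in turn is guaranteed here by the vanishing of $\widetilde K_0(S^3)$. So the proof reduces to recording that $\widetilde K_0(S^3)=0$, that consequently the surjection $W^2(S^3)=\mathrm{coker}(H:K_0(S^3)\to GW^2(S^3))\twoheadrightarrow GW^2(S^3)/\langle H(S^3)\rangle=\K {S^3}$ is an isomorphism, and then applying Theorem \ref{s3prime}. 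I would write this out in two or three sentences, citing \cite{Fo} or \cite{Sw2} for the stable triviality and Theorem \ref{s3prime} for the Witt-group vanishing.
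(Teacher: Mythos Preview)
Your approach is essentially the same as the paper's: both argue that $K_0(S^3)=\Z$ (the paper cites \cite[Theorem 3]{Sw2} directly, you phrase it via $\widetilde K_0(S^3)=0$), deduce $\K{S^3}=W^2(S^3)$, and then invoke Theorem \ref{s3prime}.

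One slip worth correcting: the natural surjection goes the other way, namely $\K{S^3}=GW^2(S^3)/\langle H(S^3)\rangle \twoheadrightarrow GW^2(S^3)/H(K_0(S^3))=W^2(S^3)$, since $\langle H(S^3)\rangle\subseteq H(K_0(S^3))$ and quotienting by the smaller subgroup gives the larger quotient. This does not affect your conclusion, because once you establish $K_0(S^3)=\Z\cdot[S^3]$ the two subgroups coincide and the map is an isomorphism in either direction; but the direction as written is reversed.
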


\begin{proof}
It is well-known that $K_0(S^3)=\Z$ (using the fact that in this case the algebraic $K$-theory and the topological $K$-theory coincide by \cite[Theorem 3]{Sw2}), and therefore we get $\K {S^3}=W^2(S^3)$ by definition.
\end{proof}

\subsection{Projective modules on $S^3$}

Now we have all the tools in hand to prove the following theorem:

\begin{thm}\label{main}
Every finitely generated projective module over the real algebraic $3$-sphere is free. 
\end{thm}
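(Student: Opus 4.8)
The plan is to establish freeness one rank at a time, the crucial preliminary being the vanishing of the reduced Grothendieck group. Since $K_0(S^3)=\Z$ (recalled in the proof of Corollary \ref{s3}, using \cite[Theorem~3]{Sw2}), the rank map identifies $K_0(S^3)$ with $\Z$, so every finitely generated projective $S^3$-module $P$ of rank $r$ is \emph{stably free}: $P\oplus (S^3)^{m}\simeq (S^3)^{m+r}$ for some $m\geq 0$. The extreme cases are then immediate. If $r=0$ there is nothing to prove. If $r\geq 4$, then $r>3$, and Bass' cancellation theorem (\cite[Chapter~IV]{Bass2}) lets us cancel the free summands one at a time until $P\simeq (S^3)^{r}$. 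If $r=1$, taking determinants in $P\oplus (S^3)^{m}\simeq (S^3)^{m+1}$ gives $P\simeq\det(P)\simeq S^3$; in particular $\mathrm{Pic}(S^3)=0$, and the same computation shows that every stably free $S^3$-module has trivial determinant, a fact I will use below.

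Next I would treat $r=2$ by applying Theorem \ref{cancellation} to $A=S^3$. As $P$ is stably free, $\det P$ is trivial, so the theorem applies and reduces the freeness of every rank-$2$ projective module to the vanishing of the group $\K{S^3}$ and of the set $Um_4(S^3)/\Sp{S^3}$. The first vanishing is Corollary \ref{s3}, i.e.\ $\K{S^3}=W^2(S^3)=0$, which is precisely Theorem \ref{s3prime}. Once $r=2$ is known, the case $r=3$ costs nothing more: cancelling free summands as far as Bass' theorem allows presents a stably free rank-$3$ module as $P\oplus S^3\simeq (S^3)^4$, hence as $P(v)$ for some $v\in Um_4(S^3)$; and since $\Sp{S^3}\subset GL_4(S^3)$, the set $Um_4(S^3)/GL_4(S^3)$ is a quotient of $Um_4(S^3)/\Sp{S^3}$, so if the latter is trivial then $v$ lies in the $GL_4(S^3)$-orbit of $(1,0,0,0)$ and $P(v)\simeq (S^3)^{3}$ by Proposition \ref{free}. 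With $r\leq 1$ and $r\geq 4$ already settled, the entire theorem is thus reduced to the single assertion $Um_4(S^3)/\Sp{S^3}=0$.

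This last point is the heart of the matter, and the step I expect to be the main obstacle: $Um_4(S^3)/\Sp{S^3}$ is an unstable invariant, not directly captured by any cohomology theory. The route I would take is first to pass to a more tractable group, using that the elementary-symplectic and elementary orbits on $Um_4$ coincide: then $Um_4(S^3)/\Sp{S^3}$ is a quotient of the abelian group $Um_4(S^3)/E_4(S^3)$ furnished by van der Kallen's theorem (\cite{vdk}, applicable since $4\geq(3+4)/2$). Because $S^3$ is a smooth affine threefold over $\R$, this group admits by \cite[Theorem~4.9]{Fa4} a cohomological description as a top-degree group $H^3(S^3,G^4)$, which I would then compute to be zero in the spirit of the localization arguments of Section \ref{witt}: remove the divisor $\{x_3=0\}$, over whose complement $S^3_{x_3}\simeq B^3_{x_2+x_3}$ is, up to a further localization, a localization of $\A^3$, where length-$4$ unimodular rows are completely understood; and over $S^3/(x_3)\simeq S^2$, a surface, length-$4$ rows are automatically completable, since $4$ exceeds the dimension of a surface plus one. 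The real work is concentrated in the comparison of the symplectic, special-linear and elementary orbits on $Um_4(S^3)$ and in this final top-degree computation, which will make essential use of the Witt-cohomology computations carried out above. Granting these, the theorem follows.
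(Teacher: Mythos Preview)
Your reduction is correct and matches the paper exactly: stably free by $K_0(S^3)=\Z$, Bass cancellation for $r\geq 4$, determinants for $r=1$, Theorem~\ref{cancellation} plus Corollary~\ref{s3} for $r=2$, and your observation that $Um_4/GL_4$ is a quotient of $Um_4/\Sp{S^3}$ handles $r=3$ (the paper instead cites \cite[Proposition~5.11]{Fa4}, but your route is fine). So the whole theorem does reduce to $Um_4(S^3)/\Sp{S^3}=*$.

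The genuine gap is in your proposed attack on that last point. You plan to show $Um_4(S^3)/E_4(S^3)=H^3(S^3,G^4)=0$ by a localization argument, but this group is \emph{not} zero. On real points the tautological row $(x_0,x_1,x_2,x_3)$ gives the identity map $S^3(\R)\to\R^4\setminus\{0\}\simeq S^3$ of Brouwer degree~$1$, while $(1,0,0,0)$ is constant of degree~$0$; since elementary matrices over $\R$ are connected to the identity, the degree descends to $Um_4(S^3)/E_4(S^3)$ and separates these two rows. So no localization argument can possibly yield $H^3(S^3,G^4)=0$, and your sketch over $S^3_{x_3}$ and $S^2$ cannot close. (Concretely, your claim that length-$4$ rows over $S^2$ are ``automatically completable'' addresses $Um_4/GL_4$, not the relevant supported cohomology group feeding into $H^3$.)

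What the paper does instead is compute, via \cite[Theorems~4.9 and~5.6]{Fa4}, that $Um_4(S^3)/E_4(S^3)$ is cyclic with generator the tautological row, and then observe that this generator is the first row of an \emph{explicit symplectic} matrix
\[
M=\begin{pmatrix} x_0 & x_1 & x_2 & x_3 \\ -x_1 & x_0 & -x_3 & x_2 \\ -x_2 & x_3 & x_0 & -x_1 \\ -x_3 & -x_2 & x_1 & x_0 \end{pmatrix}\in\Sp{S^3}.
\]
Since the first-row map $r:SL_4/E_4\to Um_4/E_4$ is a group homomorphism \cite[Theorem~5.3(ii)]{vdk}, every class is $r(M^n)$ for some $n$, hence $vM^{-n}\in e_1E_4(S^3)$; Vaserstein's theorem $e_1E_4(S^3)=e_1ESp_4(S^3)$ then gives $v\in e_1\Sp{S^3}$. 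The missing idea in your proposal is precisely this: one must exhibit the generator of $Um_4/E_4$ as coming from $\Sp{S^3}$, not argue that the group vanishes.
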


\begin{proof}
Because $K_0(S^3)=0$, all the projective modules are stably free. We know already that those of rank $3$ are free by \cite[Proposition 5.11]{Fa4}. In view of Theorem \ref{cancellation} and our computation of $\K {S^3}$, it remains to show that $Um_4(S^3)/Sp_4(S^3)$ is trivial.
By \cite[Theorem 4.9, Theorem 5.6]{Fa4}, the group $Um_4(S^3)/E_4(S^3)$ is generated by the class of the unimodular row $(x_0,x_1,x_2,x_3)$. Now the matrix
$$M:=\begin{pmatrix} x_0 & x_1 & x_2 & x_3 \\
-x_1 & x_0 & -x_3 & x_2 \\
-x_2 & x_3 & x_0 & -x_1 \\
-x_3 & -x_2 & x_1 & x_0\end{pmatrix}$$
is in $\Sp {S^3}\subset SL_4(S^3)$. Mapping a matrix (of determinant $1$) to its first row yields a homomorphism of abelian groups 
$$r:SL_4(S^3)/E_4(S^3)\to Um_4(S^3)/E_4(S^3)$$
by \cite[Theorem 5.3(ii)]{vdk}. It follows that for any $[v]$ in $Um_4(S^3)/E_4(S^3)$ there exists $n\in \Z$ such that $[v]=r(M^n)$. This yields $[v]+r(M^{-n})=0$ and then $vM^{-n}=e_1E$ for some $E\in E_4(S^3)$. Now $e_1E_4(S^3)=e_1ESp_4(S^3)$ by Vaserstein's theorem (whose french translation can be found in \cite[Proposition 6']{Bass3}). This proves that $v\in e_1Sp_4(S^3)$. Whence the result. 
\end{proof}

%%%%%%%%%%%%%%%%%%%%%%%%%%%%%%%%%%%%%%%%%%%%%%%%%%%%%%%%%%%%%%%%%%%%%%%%%%%%%%%%%%%%%%%%%%%%%%%%%%%%%%%%%%%%%%%%%

\section{Acknowledgements}
It is a pleasure to thank Manuel Ojanguren for stimulating my interest in the subject. I also wish to express my gratitude to the referee for his comments and corrections.

%%%%%%%%%%%%%%%%%%%%%%%%%%%%%%%%%%%%%%%%%%%%%%%%%%%%%%%%%%%%%%%%%%%%%%%%%%%%%%%%%%%%%%%%%%%%%%%%%%%%%%%%%%%%%%%%%

\bibliography{biblio_cancellation.bib}{}
\bibliographystyle{plain}

%%%%%%%%%%%%%%%%%%%%%%%%%%%%%%%%%%%%%%%%%%%%%%%%%%%%%%%%%%%%%%%%%%%%%%%%%%%%%%%%%%%%%%%%%%%%%%%%%%%%%%%%%%%%%%%%%

\end{document}